\newcommand{\nbR}{\mathbb{R}}
\newcommand{\nbN}{\mathbb{N}}
\newcommand{\nbC}{\mathbb{C}}
\newcommand{\nbu}{\mathbb{I}}
\author{Christian Paroissin\addressmark{1} and Landy Rabehasaina\addressmark{2}} 
\title[Passage times of perturbed subordinators with application to reliability]{Passage times of perturbed subordinators with application to reliability}
\address{\addressmark{1}Universit\'e de Pau et des Pays de l'Adour, Laboratoire de Math\'ematiques et de leurs Applications - UMR CNRS 5142, Avenue de l'Universit\'e, 64013 Pau cedex, France, {\tt cparoiss@univ-pau.fr}  \\
\addressmark{2} Universit\'e de de Franche-Comté, Laboratoire de Math\'ematiques - UMR CNRS 6623, 16 Route de Gray, 25030 Besan\c{c}on cedex, France, {\tt lrabehas@univ-fcomte.fr}}
\keywords{First-passage time, last-passage time, scale function, failure time, Lévy process, gamma
process, compound Poisson process, Brownian motion with drift}
\newtheorem{thm}{\noindent Theorem}[section]
\newtheorem{lemm}{\noindent Lemma}[section]
\newtheorem{prop}{\noindent Proposition}[section]
\newtheorem{remark}[theorem]{Remark}
\begin{document}
\maketitle

\begin{abstract}
We consider a wide class of increasing Lévy processes perturbed by an independent Brownian motion as
a degradation model. Such family contains almost all classical degradation models considered in the
literature. Classically failure time associated to such model is defined as the hitting time or
the first-passage time of a fixed level. Since sample paths are not in general increasing, we
consider also the last-passage time as the failure time following a recent work by Barker and Newby
\cite{Barker_Newby}. We address here the problem of determining the distribution of the
first-passage time and of the last-passage time. In the last section we consider a maintenance
policy for such models.
\end{abstract}


\section{Introduction and Model}

For several decades, degradation data have been more and more used to understand ageing of a
device, instead of only failure data. The most widely used stochastic processes for degradation
models belong to the class of L\'evy processes. More precisely, the three main models are the
following ones: (a) Brownian motion with (positive) drift; (b) gamma processes; (c) compound
Poisson processes. More generally we consider a broad class of L\'evy processes corresponding to
subordinators perturbed by an independent Brownian motion:
$$
\forall t \ge 0\,,\,D_t=G_t + \sigma B_t
$$
where $\{G_t,\  t\ge 0\}$ is a subordinator, i.e. a L\'evy process with non decreasing sample
paths. Since jumps of $\{D_t,\  t\ge 0\}$ are issued from $\{G_t,\  t\ge 0\}$ and are positive, we
recall that we say that $\{D_t,\  t\ge 0\}$ is {\it spectrally positive}.
This process can be characterized in terms of L\'evy exponents:
\begin{eqnarray*}
\forall u \in \RR, \; &\exp(t\phi_{D}(u))&= \EE[e^{iuD_{t}}]= \exp(t\phi_{G}(u)) \exp(t\phi_{B}(u))= \exp(t\phi_{G}(u)) \exp(-\frac{1}{2}t u^2\sigma^2)\\
&\phi_{G}(u)&=i\hat{\mu} u+\int_{\nbR\setminus \{0\}} [e^{iux}-1-iux \nbu_{[-1,1]}(x)]Q(dx)
\end{eqnarray*}
Exponent $\phi_{B}$ is associated to the Brownian motion and $\phi_{G}$ to $G_t$, which is in all
generality a jump process. It follows that the Lévy measure of $\{D_t,\  t\ge 0\}$ is the same as
that of $\{G_t,\  t\ge 0\}$ that we will denote by $\nu_{D}(dx)= Q(dx)$. Furthermore we will
suppose that measure $Q(.)$ admits a density with respect to the Lebesgue measure, i.e. that
$Q(dx)=q(x)dx$ for some density $q(.)$. In the following we will also need
$$
\varphi_D(u) = \phi_{D}(iu)= \varphi_G(u)+\frac{1}{2}u^{2}\sigma^{2},
$$
i.e. $\varphi_D(u)$ is such that $\EE[e^{-uD_{t}}]=e^{t\varphi_D(u)}$. We recall, since
$\{G_t,\  t\ge 0\}$ is a subordinator, that may write in this case $\varphi_D(u)$ in the following
way
$$\varphi_D(u)=-\mu u +\int_{0}^\infty [e^{-ux}-1]Q(dx)+\frac{1}{2}u^{2}\sigma^{2},
$$
for some $\mu\ge 0$. We consider in this paper several approaches for modelling degradation of a
device and its failure time. Failure time can traditionally be derived from a degradation model by
considering the first hitting time $T_b$ of a critical level $b>0$. The first-passage time
distribution has been already derived for the particular case of two sub-models. In the case of
Brownian motion with drift (corresponding to $G_t=\mu t$, $\mu>0$), it is the well-known inverse
Gaussian distribution, see \cite{FC} for instance. For the pure gamma process (i.e. $\sigma=0$ and
$\{G_t,\ t\ge 0\}$ is a gamma process), it has been studied by Park and Padgett \cite{ParkPadgett}.
Moreover they proposed an approximation for the cumulative distribution function of the hitting time based on Birnbaum-Saunders and inverse Gaussian  distributions.

Recently a new approach to define the failure time was proposed by Barker and Newby
\cite{Barker_Newby} that consists in considering the {\it last} passage time of degradation process
$\{D_t,\ t\ge 0\}$ above $b$. As explained in that paper, this is motivated by the fact that, even
if $\{D_t,\  t\ge 0\}$ reaches and goes beyond $b$, resulting in a temporarily "degraded" state of the device, it
can still always recover by getting back below $b$ provided this was not the last passage time
through $b$. On the other hand, if this is the last passage time then no recovery is possible
afterwards and we may then consider it as a "real" failure time. Of course, this discussion about
modelling failure time by the first or last passage time becomes irrelevant whenever process
$\{D_t,\  t\ge 0\}$ has non decreasing paths (which is not the case e.g. of the Brownian motion) since in that
case both quantities coincide.

In this paper we then investigate these quantities for a rather wide class of so-called perturbed
process. In Section \ref{Section_first_passage} we provide the Laplace transform of the first
passage time $T_b$ with penalty function involving the corresponding under and overshoot of the
process. We then confront this approach to related recent existing results on such passage times
distributions in the general theory of L\'evy processes, that introduces the notion of so-called
{\it scale functions}. The case of several sub-models is reviewed (or revisited) : in these cases
the probability distribution function (pdf) and/or the cumulative distribution function (cdf) can
be computed explicitly, or at least numerically. In conclusion of this section we propose an
alternative degradation process that takes into account the fact that the process cannot be in
theory negative and suggests that $\{D_t,\ t\ge 0\}$ be reflected at zero. In that setting we use
the aforementioned recent results in the theory of L\'evy and reflected L\'evy processes to obtain
the joint distribution of the first passage time jointly to the overshoot distribution. In Section
\ref{Section_last_passage} we study the case where failure time corresponds to the last passage
time $L_b$ above $b$ and derive its distribution in the non reflected and reflected case. Finally
we consider in Section \ref{Section_maintenance} a maintenance policy problem inspired by
\cite{Barker_Newby} and derive distribution of related quantities.

To conclude this introduction, we make precise where in the present paper previously published
results are reviewed and what is actually novel. Proposition \ref{prop_renewal_LT} in Section
\ref{Section_General_Case} is new, but its proof is similar to the one corresponding to proof of
Remark 4.1 as well as Expressions (4.4) and (4.5) of Garrido and Morales \cite{Garrido_Morales}.
Section \ref{Section_reflected} recalls facts (with short proofs) previously established in the
literature that are useful later on. On the other hand and to the best of our knowledge, Theorems
\ref{thm_Last_D_T} and \ref{thm_Last_D_T*_LT} in Section \ref{Section_last_passage} concerning last
passage times may be linked to Chiu and Yin \cite{Chiu_Yin}, Baurdoux \cite{Baurdoux} and recent
paper Kyprianou {\em et al.} \cite{KPR} but are otherwise genuinely new. Similarly Section
\ref{Section_maintenance} deals with determining reliability quantities features unheard-of
results.


\section{First-passage time as failure time}\label{Section_first_passage}

We consider here the hitting time distribution of a fixed level $b>0$ by the perturbed process
$\{D_t,\ t\ge 0\}$:
$$
T_b = \inf \left\{ t \ge 0 \;;\; D_t \ge b \right\}
$$
which we remind is a.s. finite. We study below the distribution of $(T_b,D_{T_b-},D_{T_b})$ by determining the following quantity
\begin{equation}\label{LT_penalty}
\phi_w(\delta,b)=\EE(e^{-\delta T_b}w(D_{T_b-}, D_{T_b}))
\end{equation}
where $\delta \ge 0$ and $w(.,.)$ is an arbitrary continuous bounded function that will be
referred to as {\it penalty function}. In the following we will drop the subscript when there is
no ambiguity on $w(.,.)$ and then write $\phi(\delta,b)$ instead of $\phi_w(\delta,b)$. We then
determine (\ref{LT_penalty}) in the general case and then illustrate our results to sub-models,
some of which distribution of $T_b$ has already been obtained.

\subsection{General case}\label{Section_General_Case}

We are interested in the case where process $\{G_t,\  t\ge 0\}$ is general. To this end, we use a well known technique that consists in approaching  the jump part process in $\{G_t,\  t\ge 0\}$ by a compound Poisson
processes which, as said in the Introduction, is similar to the one used in \cite{Garrido_Morales}
(for more details see Appendix A.1 in \cite{Garrido_Morales}). More precisely this process can be
pointwise approximated by a sequence of compound Poisson processes $((S(t,n))_{t \ge
0})_{n\in \NN}$ such that:
\begin{enumerate}
\item $(S(t,n))_{n\in \NN}$ is increasing for all $t\ge 0$,
\item $\mu t + \lim_{n\to \infty} S(t,n)=G_t$ for all $t\ge 0$,
\item for all $n$, $(S(t,n))_{t\ge 0}$ has intensity $\lambda_n$ and jump size with c.d.f. $P_n(x)$ with
\begin{eqnarray}
\lambda_n & = & \bar{Q}(1/n)\label{lambda_n}\\
P_n(x) & =& \frac{\bar{Q}(1/n)-\bar{Q}(x)}{\bar{Q}(1/n)}\nbu_{\{ x\ge 1/n\}}\label{P_n}
\end{eqnarray}
where $\bar{Q}(x):=Q([x,+\infty))$. Note that $\bar{Q}$ defines measure such that $\bar{Q}(dx)=-Q(dx)$.
\end{enumerate}
Note that this approach is particularly interesting when $\lambda_n=\bar{Q}(1/n)\longrightarrow
\bar{Q}(0)=Q([0,+\infty))=+\infty$ as $n\to\infty$, i.e. when process has infinitely many jumps on
any interval. Intuitively $\{S(t,n),\ t\ge 0\}$ is obtained from $\{G_t,\  t\ge 0\}$ by discarding
all jumps that are of size less than $1/n$. Since $\{S(t,n),\ t\ge 0\}$ increases towards $\{D_t,\
t\ge 0\}$, we have that
\begin{equation}
T_b^n \searrow T_b,\quad n\to \infty, \mbox{ a.s.},
\label{Conv}
\end{equation}
where $T_b^n$ is the hitting time of level $b$ of the truncated process $\{D^n_t,\  t\ge 0\}$ defined by $D^n_t=S(t,n)+\sigma B_t$ for any $t \ge 0$ and any $n \in \NN$. We remind that $T_b^n$ is also a.s. finite. In fact  $T_b^n$ may be described as a ruin time (i.e. the first hitting time of $0$ of a stochastic process) in the following way:
$$
T_b^n=\inf\{ t\ge 0\;;\; b-\mu t-S(t,n)+\sigma B_t<0 \}
$$
and we are interested in the Laplace transform $\phi_n(\delta):=\EE(e^{-\delta T^n_b}w(D^n_{T^n_b-}, D^n_{T^n_b}))$ of $T_b^n$ with penalty function $w(.,.)$ for all $\delta\ge 0$. Let $\rho_n=\rho_n(\delta)$ be the positive solution to the following equation:
\begin{equation}\label{Lundberg_n}
\lambda_n\int_0^\infty e^{-\rho_n x}dP_n(x)=\lambda_n+\delta -\frac{\sigma^2}{2}\rho_n^2+\mu \rho_n
\end{equation}
that we will call {\it generalized Lundberg equation}. 
We start by showing convergence of $\rho_n$ as $n\to \infty$.
\begin{prop}
$\rho_n$ converges as $n\to \infty$ to the unique solution $\rho>0$ to the following generalized Lundberg equation:
\begin{equation}\label{gen_Lundberg}
\delta - \frac{\sigma^2}{2}\rho^2=\varphi_G(\rho)\quad \iff \quad \delta= \varphi_D(\rho)
\end{equation}
\end{prop}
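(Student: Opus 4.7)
The plan is to rewrite \eqref{Lundberg_n} in a form directly comparable with \eqref{gen_Lundberg}, use convexity to squeeze $(\rho_n)$ between $0$ and $\rho$, and then conclude by dominated convergence.

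I would first use \eqref{lambda_n}--\eqref{P_n} to note that $\lambda_n\,dP_n(x)=\mathbb{I}_{\{x\ge 1/n\}}Q(dx)$ and $\lambda_n=\int_{1/n}^\infty Q(dx)$, so that \eqref{Lundberg_n} rewrites as $g_n(\rho_n)=\delta$ with
$$g_n(\rho):=-\mu\rho+\tfrac{\sigma^2}{2}\rho^2+\int_{1/n}^\infty\bigl[e^{-\rho x}-1\bigr]Q(dx),$$
while the target equation \eqref{gen_Lundberg} reads $g(\rho)=\delta$ with $g:=\varphi_D$.

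I would then observe that $g_n$ and $g$ are smooth, strictly convex on $[0,\infty)$ (both second derivatives are strictly positive) and vanish at $0$, so each of the equations $g_n(\rho)=\delta$ and $g(\rho)=\delta$ has a unique positive solution. Since $e^{-\rho x}-1\le 0$, enlarging the domain of integration lowers $g_n$; hence $g_n\ge g_{n+1}\ge g$ pointwise on $[0,\infty)$. By convexity and $g_n(0)=0$, the sub-level set $\{g_n\le\delta\}\cap[0,\infty)$ is exactly the interval $[0,\rho_n]$; the pointwise ordering then yields the chain $[0,\rho_n]\subseteq[0,\rho_{n+1}]\subseteq[0,\rho]$. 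Hence $(\rho_n)$ is nondecreasing and bounded above by $\rho$, so it converges to some $\rho_\star\le\rho$.

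To finish, I would pass to the limit in $g_n(\rho_n)=\delta$. The polynomial contribution is immediate; the substantive point is
$$\int_{1/n}^\infty\bigl[e^{-\rho_n x}-1\bigr]Q(dx)\xrightarrow[n\to\infty]{}\int_0^\infty\bigl[e^{-\rho_\star x}-1\bigr]Q(dx),$$
for which I would apply dominated convergence: the integrand $\mathbb{I}_{\{x\ge 1/n\}}[e^{-\rho_n x}-1]$ converges pointwise to $e^{-\rho_\star x}-1$ on $(0,\infty)$ and, using $\rho_n\le\rho$, is bounded by $\min(1,\rho x)$, which is $Q$-integrable by the Lévy-measure estimate $\int_0^\infty (x\wedge 1)\,Q(dx)<\infty$ valid for any subordinator. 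This produces $g(\rho_\star)=\delta$, and uniqueness of the positive root forces $\rho_\star=\rho$. The one genuinely non-cosmetic step is securing the $Q$-integrable dominant near the origin; everything else is elementary convex analysis.
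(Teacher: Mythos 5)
Your proof is correct and follows essentially the same route as the paper: rewrite \eqref{Lundberg_n} as the truncated-at-$1/n$ version of $\varphi_D(\rho)=\delta$, show $(\rho_n)$ is nondecreasing and bounded above by $\rho$, pass to the limit using $\int_0^\infty (1\wedge x)\,Q(dx)<+\infty$, and conclude by uniqueness of the positive root. The only (harmless) differences are in bookkeeping: you justify the monotonicity and the bound $\rho_n\le\rho$ explicitly via convexity and sublevel sets, and you take the limit by dominated convergence with dominant $\min(1,\rho x)$, whereas the paper uses the elementary estimate $1-e^{-\rho_n x}\le \rho x$ together with continuity of the limit function $f=\delta-\varphi_D$ -- both resting on the same subordinator integrability condition.
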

\begin{proof}
Thanks to Expressions (\ref{lambda_n}) and (\ref{P_n}) of $\lambda_n$ and c.d.f. $P_n$, we may
rewrite (\ref{Lundberg_n}) in the following way
\begin{eqnarray*}
&& \int_{1/n}^\infty e^{-\rho_n x} Q(dx)= \bar{Q}(1/n) + \delta - \frac{\sigma^2}{2}\rho_n^2+\mu \rho_n\iff  \int_{1/n}^\infty e^{-\rho_n x} Q(dx)= \int_{1/n}^\infty Q(dx) + \delta - \frac{\sigma^2}{2}\rho_n^2+\mu \rho_n\\
&&\iff \delta - \frac{\sigma^2}{2}\rho_n^2 +\mu \rho_n+ \int_{1/n}^\infty\left( 1- e^{-\rho_n x}\right
)Q(dx)=0.
\end{eqnarray*}
Thus $\rho_n$ is the only positive solution to equation $f_n(z)=0$ where
$
f_n(z):=\delta - \frac{\sigma^2}{2}z^2 +\mu z+ \int_{1/n}^\infty\left( 1- e^{-z x}\right)Q(dx).
$
Let us note that $(f_n)_{n \in \mathbb{N}}$ increasingly converges pointwise towards
$$
f(z)= \delta - \frac{\sigma^2}{2}z^2 +\mu z+ \int_{0}^\infty\left( 1- e^{-z x}\right)Q(dx)=\delta-\varphi_D(z),
$$
so that $\rho_n$ converges increasingly towards $\rho^*:=\sup_{n\in\nbN} \rho_n$. Besides one can verify that $f(z)=0$ admits an unique solution on $(0,+\infty)$, which is solution $\rho$ to Equation (\ref{gen_Lundberg}). Thus $\rho^*$ is less than or equal to solution $\rho$ and we prove that we in fact have equality $\rho^*=\rho$ which is achieved by showing that $f(\rho^*)=0$. Indeed, using inequality $0\le 1-e^{-zx}\le zx$ for all $z,x\ge 0$ and since $f_n(\rho_n)=0$, we have
\begin{eqnarray}
|f(\rho^*)| & = & |f(\rho^*)- f_n(\rho_n) |\nonumber\le |f(\rho^*)- f(\rho_n) | + |f(\rho_n)- f_n(\rho_n) |\nonumber\\
&\le & |f(\rho^*)- f(\rho_n) | + \int_0^{1/n} \left( 1- e^{-\rho_n x}\right)Q(dx)\le  |f(\rho^*)- f(\rho_n) | + \rho_n \int_0^{1/n} x Q( dx)\nonumber\\
&\le & |f(\rho^*)- f(\rho_n) | + \rho \int_0^{1/n} x Q(dx)\quad \mbox{since } \rho_n \le \rho^*\le \rho.\label{GLE_0}
\end{eqnarray}
We recall that the fact that $\{G_t,\  t\ge 0\}$ is a subordinator (a non decreasing L\'evy process) implies that $\int_0^{\infty} (1\wedge x) Q(dx)<+\infty$ (see e.g. (2) p.72 of \cite{Bertoin}), hence $\int_0^{1/n} x Q(dx)\longrightarrow 0$. Remembering that $f$ is a continuous function, this implies that (\ref{GLE_0}) tends to zero as $n\to +\infty$, hence $f(\rho^*)=0$.
\end{proof}

The Laplace transform $\phi_n(\delta)$ with penalty function $w(.,.)$ of $T_b^n$ is given through the following which is a particular case of Theorem 2 of \cite{TW} adapted to our context:
\begin{theorem}\label{th_renewal_n}
Let $w(.,.)$ be a bounded continuous function and define
$$
\omega_n(x)=\int_x^\infty w(x,y-x)dP_n(y).
$$
Then $b\mapsto \phi_n(\delta,b):= \EE(e^{-\delta T^n_b}w(D^n_{T^n_b-}, D^n_{T^n_b}))$ satisfies the
renewal equation
\begin{equation}
\phi_n(\delta,b)= \phi_n(\delta,\cdot)\star g_n(\delta,\cdot)(b)+ h_n(\delta,b)
\label{eq_renewal_n}
\end{equation}
where functions $g_n(\cdot,\cdot)$ and $h_n(\cdot,\cdot)$ are defined by
\begin{eqnarray}
g_n(\delta,y)&=& \frac{2\lambda_n}{\sigma^2} \int_0^y e^{-[-2\mu/\sigma^2+\rho_n(\delta)] (y-s)} \int_s^\infty e^{-\rho_n(\delta) (x-s)}dP_n(x) ds\label{g_n}\\
h_n(\delta,y)&=& e^{-[-2\mu/\sigma^2+\rho_n(\delta)]y}+ \frac{2\lambda_n}{\sigma^2} \int_0^y e^{-[-2\mu/\sigma^2+\rho_n(\delta)] (y-s)} \int_s^\infty e^{-\rho_n(\delta) (x-s)}\omega_n(x) dx ds. \label{h_n}
\end{eqnarray}
\end{theorem}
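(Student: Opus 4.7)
The plan is to derive the renewal equation (\ref{eq_renewal_n}) by conditioning on the first jump of the compound Poisson component $S(\cdot,n)$ and invoking the strong Markov property, following the scheme of the proof of Theorem~2 of \cite{TW} while keeping track of the extra linear drift $\mu t$ carried by $D^n_t$.

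\emph{Step 1: conditioning on the first jump.} Let $\tau\sim\mathrm{Exp}(\lambda_n)$ be the first epoch of $S(\cdot,n)$, let $J$ be the associated jump with c.d.f.\ $P_n$ (independent of $\tau$ and of $B$), and set $\tau_b:=\inf\{t\ge 0:\mu t+\sigma B_t=b\}$. On $\{\tau_b<\tau\}$ the process $D^n$ reaches $b$ continuously, with $T^n_b=\tau_b$ and $D^n_{T^n_b-}=D^n_{T^n_b}=b$. On $\{\tau<\tau_b\}$, writing $Y:=\mu\tau+\sigma B_\tau$, either $Y+J\ge b$ and $T^n_b=\tau$, with undershoot $Y$ and overshoot $Y+J-b$; or $Y+J<b$ and the Markov property restarts the process at the fresh level $b-(Y+J)$. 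These three alternatives give an identity of the shape
\[
\phi_n(\delta,b)=w(b,b)\,\EE[e^{-\delta\tau_b};\tau_b<\tau]+A(b)+\int_0^b\phi_n(\delta,b-z)\,K_n(dz),
\]
with $A(b)$ collecting the expected penalty realised at $\tau$ and $K_n$ the discounted sub-probability kernel of $Y+J$ on the regeneration event.

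\emph{Step 2: explicit Brownian computations.} Each of the three expectations reduces to a computation for the drifted Brownian motion $\mu t+\sigma B_t$ killed at exponential rate $\lambda_n+\delta$. The Laplace transform of $\tau_b$ on $\{\tau_b<\tau\}$ has the form $e^{-\alpha_n(\delta)b}$, where $\alpha_n(\delta)>0$ solves $\tfrac{\sigma^2}{2}\alpha^2+\mu\alpha-(\lambda_n+\delta)=0$; a short manipulation of (\ref{Lundberg_n}) identifies $\alpha_n(\delta)$ with $\rho_n(\delta)-2\mu/\sigma^2$, which is exactly the exponent driving $g_n$ and $h_n$. Similarly, the discounted joint density of $(\tau,Y)$ on $\{\tau<\tau_b\}$ factors as an exponential Brownian weight in the time variable times a conditional density for $Y$, producing the outer integrand $e^{-[-2\mu/\sigma^2+\rho_n(\delta)](y-s)}$ of (\ref{g_n})--(\ref{h_n}). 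Plugging these densities into $A(b)$ and $K_n$ and splitting the integration over $J$ at the cut-off $b-Y$ separates a penalty contribution (producing the $\omega_n$-integral inside $h_n$) from a regenerative contribution (producing the kernel $g_n$), which after a swap of integration orders yields (\ref{eq_renewal_n}) with $g_n,h_n$ as stated.

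\emph{Main obstacle.} The crux is recognising that the natural Brownian exponent arising from the killing computation coincides with the positive Lundberg root $\rho_n(\delta)$ of (\ref{Lundberg_n}); equivalently, that $\exp(\rho_n(\delta)D^n_t-\delta t)$ is a martingale. This identification is what collapses the three separate contributions of Step~1 into a single convolution of $\phi_n(\delta,\cdot)$ with $g_n$ plus an inhomogeneous term $h_n$. Once it is made the remaining work is mostly bookkeeping, made rigorous by the boundedness and continuity of $w$ (which legitimise every Fubini swap and dominated convergence step) and by the independence of $B$, $\tau$ and $J$ (which makes the conditional densities factor as required).
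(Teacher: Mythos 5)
Your strategy (redo the Tsai--Willmot argument by conditioning on the first jump and invoking the Lundberg root) founders on the pivotal identification in Step 2, which is false, and it is exactly the step that was supposed to collapse your three contributions into (\ref{eq_renewal_n}). The exponent governing "hit $b$ by diffusion before the first jump" is the positive root $\alpha_n(\delta)$ of $\tfrac{\sigma^2}{2}\alpha^2+\mu\alpha-(\lambda_n+\delta)=0$, so that $\EE[e^{-\delta\tau_b};\tau_b<\tau]=e^{-\alpha_n(\delta)b}$; but $\alpha_n(\delta)\neq -2\mu/\sigma^2+\rho_n(\delta)$ as soon as there are jumps. Indeed, substituting and using (\ref{Lundberg_n}),
\[
\frac{\sigma^2}{2}\Bigl(\rho_n-\frac{2\mu}{\sigma^2}\Bigr)^2+\mu\Bigl(\rho_n-\frac{2\mu}{\sigma^2}\Bigr)-(\lambda_n+\delta)
=\frac{\sigma^2}{2}\rho_n^2-\mu\rho_n-(\lambda_n+\delta)
=-\lambda_n\int_0^\infty e^{-\rho_n x}\,dP_n(x)<0
\]
whenever $\lambda_n>0$, so $\rho_n(\delta)-2\mu/\sigma^2$ is strictly smaller than $\alpha_n(\delta)$; the two coincide only in the jump-free case, which is why the pure Brownian example of Section \ref{subsection_examples} misleadingly looks consistent with your claim. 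Symptomatically, the martingale you invoke has the wrong sign: the mean-one martingale is $\exp(-\rho_n(\delta)D^n_t-\delta t)$, not $\exp(\rho_n(\delta)D^n_t-\delta t)$, whose expectation need not even be finite for positive jumps. With the correct $\alpha_n(\delta)$ your Step 1 does not recombine into the stated $g_n,h_n$.

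There is also a structural mismatch in Step 1 itself: on $\{\tau<\tau_b\}$ the pre-jump level $Y=\mu\tau+\sigma B_\tau$ ranges over all of $(-\infty,b)$, so the regeneration level $b-(Y+J)$ may exceed $b$, and the discounted law of $(\tau,Y)$ on $\{\tau<\tau_b\}$ is the resolvent of drifted Brownian motion killed at $\lambda_n+\delta$ and at level $b$, a $b$-dependent two-sided exponential involving \emph{both} roots of the quadratic above. First-jump conditioning therefore yields an integral equation with a $b$-dependent kernel not supported on $[0,b]$, not the defective renewal equation (\ref{eq_renewal_n}); in \cite{TW} the exponent $-2\mu/\sigma^2+\rho_n(\delta)$ and the Dickson--Hipp-type inner integrals $\int_s^\infty e^{-\rho_n(x-s)}dP_n(x)$ and $\int_s^\infty e^{-\rho_n(x-s)}\omega_n(x)dx$ come out of an integro-differential/Laplace-transform analysis in which $\rho_n(\delta)$ removes the singularity of the transform, and the term $e^{-[-2\mu/\sigma^2+\rho_n(\delta)]y}$ in $h_n$ is \emph{not} the probability of creeping before the first jump (note also that it carries penalty $1$, not $w(b,b)$, a convention your decomposition would have to build in). The paper's own proof is simply a parameter identification in Theorem 2 of \cite{TW} ($D:=\sigma^2/2$, $\lambda:=\lambda_n$, $P:=P_n$, drift $-\mu$, and TW's exponent equal to $-2\mu/\sigma^2+\rho_n(\delta)$); to prove the statement from scratch you would need to reproduce that transform analysis, or verify directly that the stated $g_n,h_n$ solve the (corrected) first-passage decomposition. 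As written, the proposal has a genuine gap.
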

\begin{proof}
With notations of \cite{TW}, we have $g_n(\delta,y)$ expressed as in (1.10) therein with
$b:=b(\delta)=-2\mu/\sigma^2+\rho_n(\delta)$, $\lambda:=\lambda_n$, $P(\cdot):=P_n(\cdot)$ and $D= \sigma^2/2$.
Still with notations of \cite{TW}, and in Theorem 2 therein, we see that function $y\mapsto h_n(\delta,y)$ is the sum of
$e^{-[-2\mu/\sigma^2+\rho_n(\delta)]y}$ and some function $g_w(\cdot)$ defined in Expression (2.8) of \cite{TW} that depends on
$\omega_n$. It is easy to verify that this function is the last term on the right-handside of
(\ref{h_n}).
\end{proof}

Passing on the limit $n\to +\infty$ in Theorem \ref{th_renewal_n} yields the following renewal
equation for function (\ref{LT_penalty}):
\begin{prop}\label{prop_renewal_LT}
Let $\omega(x):=\int_x^\infty w(x,y-x)Q(dy)$. Function $\phi(\delta,\cdot)=\phi_w(\delta,\cdot)$
satisfies the renewal equation
\begin{equation}
\phi(\delta,b)=\phi(\delta,\cdot)\star g(\delta,\cdot)(b)+ h(\delta,b)
\label{eq_renewal_LT}
\end{equation}
where functions $g(\cdot,\cdot)$ and $h(\cdot,\cdot)$ are defined by
\begin{eqnarray}
g(\delta,y)&=& \frac{2}{\sigma^2} \int_0^y e^{-[-2\mu/\sigma^2+\rho(\delta)] (y-s)} \int_s^\infty e^{-\rho(\delta) (x-s)} Q(dx) ds\label{g}\\
h(\delta,y)&=& e^{-[-2\mu/\sigma^2+\rho(\delta)]y}+ \frac{2}{\sigma^2} \int_0^y e^{-[-2\mu/\sigma^2+\rho(\delta)] (y-s)} \int_s^\infty e^{-\rho(\delta) (x-s)}\omega(x) dx ds. \label{h}
\end{eqnarray}
Hence $\phi(\delta,b)$ is given by the Pollaczek-Kinchine like formula
\begin{equation}\label{PK_phi}
\phi(\delta,b)=\sum_{k=0}^\infty g^{\star k}(\delta,.)\star h(\delta,.) (\delta, b).
\end{equation}
\end{prop}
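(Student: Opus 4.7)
My plan is to derive Proposition \ref{prop_renewal_LT} by passing to the limit $n\to\infty$ in the renewal equation (\ref{eq_renewal_n}) of Theorem \ref{th_renewal_n}. There are three pointwise limits to track, namely $\phi_n(\delta,b)\to\phi(\delta,b)$, $g_n(\delta,y)\to g(\delta,y)$ and $h_n(\delta,y)\to h(\delta,y)$. Since $\rho_n\to\rho$ is already in hand, the remaining work is to justify the exchange of integrals and limits on the right-hand side.

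I would start with the convergences $g_n\to g$ and $h_n\to h$, which are purely analytic. From (\ref{lambda_n})--(\ref{P_n}) one reads off the identity $\lambda_n\,dP_n(x)=\mathbf{1}_{\{x\ge 1/n\}}\,Q(dx)$, so that
$$
\lambda_n\int_s^\infty e^{-\rho_n(x-s)}\,dP_n(x)=\int_{s\vee 1/n}^\infty e^{-\rho_n(x-s)}\,Q(dx).
$$
For each fixed $s>0$ this converges to $\int_s^\infty e^{-\rho(x-s)}\,Q(dx)$ because $\rho_n\uparrow\rho$ and the lower endpoint shrinks to $s$. The outer integration in $s$ is dealt with via Fubini: after swapping and performing the $s$-integral explicitly, the residual $Q$-integrand is bounded by a constant multiple of $x\wedge 1$, which is $Q$-integrable since $G$ is a subordinator. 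Dominated convergence then gives $g_n(\delta,y)\to g(\delta,y)$. The same scheme produces $h_n\to h$: the prefactor $\lambda_n$ combines with $\omega_n(x)$ to give $\lambda_n\omega_n(x)=\int_{x\vee 1/n}^\infty w(x,y-x)\,Q(dy)\to\omega(x)$, and the boundedness of $w$ supplies the needed domination.

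The more delicate step is $\phi_n(\delta,b)\to\phi(\delta,b)$, because the penalty is evaluated at a random time. By (\ref{Conv}) we have $T_b^n\searrow T_b$ almost surely; what remains is the joint convergence of the triple $(T_b^n,D_{T_b^n-}^n,D_{T_b^n}^n)$. I would split according to whether $T_b$ is a jump time of $G$: if so, with a jump of positive size $\Delta$, then for $n$ large enough one has $1/n<\Delta$, the jump is retained by the truncation $S(\cdot,n)$, and the triple eventually coincides with the limit; if $T_b$ is not a jump time of $G$, then $D$ crosses $b$ through its continuous component $\mu t+\sigma B_t$, so $D_{T_b-}=D_{T_b}=b$ and the convergence reduces to the pathwise continuity of that component at $T_b$. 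In both cases the continuity of $w$ and the bound $|w|\le\|w\|_\infty$ allow dominated convergence. Passing to the limit in (\ref{eq_renewal_n}) then yields (\ref{eq_renewal_LT}).

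Finally, the Pollaczek--Khinchine expansion (\ref{PK_phi}) follows from iterating the renewal equation: substituting $\phi$ into its own right-hand side $k$ times gives $\phi=\sum_{j=0}^{k}g^{\star j}\star h+g^{\star(k+1)}\star\phi$, and one needs to check that the remainder vanishes as $k\to\infty$. The piece I expect to require the most attention is showing that $\int_0^\infty g(\delta,y)\,dy<1$ (strictly so when $\delta>0$): by Fubini one obtains an explicit expression for this mass, which must then be related to the generalized Lundberg equation (\ref{gen_Lundberg}) defining $\rho$ in order to conclude. This last verification is essentially the only nontrivial algebraic obstacle in the argument.
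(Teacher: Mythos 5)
Your proposal follows essentially the same route as the paper's proof: pass to the limit $n\to\infty$ in the renewal equation of Theorem \ref{th_renewal_n}, using $\lambda_n\,dP_n(x)=\nbu_{\{x\ge 1/n\}}\,Q(dx)$ and $\rho_n\to\rho$ to obtain $\lambda_n\omega_n\to\omega$, $g_n\to g$ and $h_n\to h$, hence (\ref{eq_renewal_LT}). In fact you give more detail than the paper on the convergence $\phi_n\to\phi$ and on the mass check behind (\ref{PK_phi}) (which does work out via the Lundberg equation: $\int_0^\infty g(\delta,y)\,dy=1-2\delta/(\sigma^2\rho(\delta)^2-2\mu\rho(\delta))$, strictly less than $1$ for $\delta>0$), steps the paper leaves implicit by reference to Garrido and Morales.
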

Note that (\ref{PK_phi}) is analogous to Expression (4.2) in \cite{Garrido_Morales}.\\

\begin{proof}
Let us prove that $\lambda_n \omega_n$ converges to $\omega$. This is easily seen by remembering that $\lambda_n=\bar{Q}(1/n)$ and thus that, by (\ref{P_n}),
$$
\lambda_n \omega_n(x)=-\int_x^\infty  w(x,y-x)\nbu_{\{ y\ge 1/n\}} d\bar{Q}(y)
$$
which converges to the desired expression, remembering that $-d\bar{Q}(y)=dQ(y)$. Convergence of $h_n$ to $h$ follows from (\ref{h_n}). In the same way, $\lambda_n\int_s^\infty e^{-\rho_n(\delta) (x-s)}dP_n(x)$ converges to $\int_s^\infty e^{-\rho(\delta) (x-s)} Q(dx)$, yielding convergence of $g_n$ to $g$ thanks to (\ref{g_n}).
\end{proof}

As announced in the Introduction, it is also possible to use the theory of L\'evy processes to propose a different approach for
determining the joint distribution of the hitting time $T_b$ jointly to the state of $D_{T_b}$, using {\it scale functions}.
More precisely, we have the following proposition from e.g. Kyprianou and Palmowski \cite{KP}:
\begin{prop}[Theorem 1 (4) \cite{KP}]\label{approach_KP}
Let us define for all $\delta \ge 0$ the scale function $W^{(\delta)}$, through its Laplace transform, and
$Z^{(\delta)}$ by
\begin{eqnarray}
 \int_0^\infty e^{-\lambda x} W^{(\delta)}(x)dx&=&  \frac{1}{\varphi_D(\lambda)-\delta},\quad \lambda >
 \rho(\delta) \label{eq:Wq}\\
 Z^{(\delta)}(x)&=& 1+\delta \int_0^x W^{(\delta)}(y)dy\label{eq:Zq},
\end{eqnarray}
where we recall that $\rho(\delta)$ is solution to the Lundberg equation
$\varphi_D(\lambda)=\delta$. Then from Expression (4) p.19 of \cite{KP} one has that
\begin{equation}\label{KP_T_b}
\EE[e^{-\delta T_b}]= Z^{(\delta)}(b) - \frac{\delta}{\rho(\delta)}W^{(\delta)}(b).
\end{equation}
\end{prop}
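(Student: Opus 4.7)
The plan is to reduce the identity to the classical two-sided exit formula for spectrally negative L\'evy processes via the duality $Y:=-D$.

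First I would note that $Y$ is a spectrally negative L\'evy process (since the jumps of $D$ are positive) whose Laplace exponent $\psi(\lambda):=\log \EE[e^{\lambda Y_1}]$ coincides with $\varphi_D(\lambda)$, and whose largest root $\Phi(\delta)$ of $\psi(\lambda)=\delta$ therefore coincides with $\rho(\delta)$. Moreover, $T_b=\inf\{t\ge 0 : Y_t\le -b\}$ and, by spatial homogeneity, this has the same law as the downward first-passage time $\tau^-_0$ of $Y$ to level $0$ when $Y$ is started from $b$.

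Next I would invoke the classical two-sided exit identity for spectrally negative L\'evy processes: for $0\le b\le a$, starting $Y$ at $b$,
\[
 \EE\!\left[e^{-\delta \tau^-_0};\, \tau^-_0 < \tau^+_a\right]
 = Z^{(\delta)}(b) - Z^{(\delta)}(a)\,\frac{W^{(\delta)}(b)}{W^{(\delta)}(a)},
\]
where $\tau^+_a=\inf\{t\ge 0 : Y_t\ge a\}$. This identity is established by applying the optional stopping theorem to the $\delta$-martingales $t\mapsto e^{-\delta t}W^{(\delta)}(Y_t)$ and $t\mapsto e^{-\delta t}Z^{(\delta)}(Y_t)$ at the bounded stopping times $\tau^-_0\wedge \tau^+_a\wedge t$, using the boundary value $Z^{(\delta)}(0)=1$ together with the fact that $Y$ creeps upward continuously through $a$ (so there is no overshoot at the upper boundary), then letting $t\to+\infty$.

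Finally I would let $a\to+\infty$. Since $\tau^+_a \nearrow +\infty$ almost surely, the left-hand side above tends by monotone convergence to $\EE[e^{-\delta \tau^-_0}]=\EE[e^{-\delta T_b}]$. The right-hand side is handled via the standard asymptotic
\[
 \frac{Z^{(\delta)}(a)}{W^{(\delta)}(a)} \,\longrightarrow\, \frac{\delta}{\Phi(\delta)} = \frac{\delta}{\rho(\delta)}, \qquad a\to+\infty,
\]
which follows from $W^{(\delta)}(a)\sim e^{\Phi(\delta)a}/\psi'(\Phi(\delta))$ combined with definition (\ref{eq:Zq}). Assembling these pieces yields the announced formula.

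The main obstacle is not the passage to the limit but the two-sided exit identity itself, which lies at the heart of the fluctuation theory of spectrally one-sided L\'evy processes; a fully self-contained derivation would rely either on the Wiener--Hopf factorisation of $Y$ or on a direct construction of the scale functions via excursion theory, and this is precisely what is packaged up for us in the cited Theorem 1(4) of \cite{KP}.
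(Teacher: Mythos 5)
Your argument is correct and follows essentially the same route as the paper: pass to the spectrally negative dual $-D$ started at $b$, identify its Laplace exponent with $\varphi_D$ and its root $\Phi(\delta)$ with $\rho(\delta)$, and invoke the fluctuation identity of Kyprianou--Palmowski. The only difference is that you additionally sketch how that identity follows from the two-sided exit formula by letting the upper barrier tend to infinity, a derivation the paper omits by citing Expression (4) of \cite{KP} directly.
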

Just to be clear on notations, we emphasize that \cite{KP} deals with spectrally negative processes. To apply it here (hence to obtain Expressions (\ref{eq:Wq}), (\ref{eq:Zq}) and (\ref{KP_T_b})), we thus need to consider hitting time of $0$ of process $\tilde{D}_t:=-D_t$ starting from $\tilde{D}_0=b$. In particular, Laplace exponent $\psi(.)$ of $\tilde{D}_t$ as defined in Expression (2) of \cite{KP} by $\EE[e^{\lambda \tilde{D}_t}]=e^{t\psi(\lambda)}$ does coincide with function $\varphi_D(.)$, and $\Phi(\delta)=\sup\{\lambda\ge 0|\ \psi(\lambda)=\delta \}$, also defined in \cite{KP}, coincides with $\rho(\delta)$.
\begin{remark}[scale function regularity]\label{scale_diff}
A necessary condition for function $W^{(\delta)}$ defined in the Proposition \ref{approach_KP} to
be differentiable is that $\{D_t,\  t\ge 0\}$ has unbounded variation, which is the case here since
it has a Gaussian component (i.e. $\sigma>0$). In fact it is shown in \cite{CKS} the stronger fact
that $\sigma>0$ implies that $W^{(\delta)}$ is twice differentiable.
\end{remark}
\begin{remark}[boundary value of scale function]\label{scale_boundary}
Still in the present case where process $\{D_t,\  t\ge 0\}$ has unbounded variation, we have that $W^{(\delta)}(0)=0$ by Lemma
8.6 p.222 of \cite{Kyprianou_book}.
\end{remark}
As a complement to (\ref{KP_T_b}), it is interesting to note that Remark 3 of \cite{KP} gives an explicit expression of the joint Laplace transform of $(T_b, D_{T_b})$.

The approach in Proposition \ref{approach_KP} has however a cost, which is that a Laplace Transform
inversion of (\ref{eq:Wq}) is required to obtain the scale function. However recent results have
been found concerning expression of $W^{(\delta)}$ in particular cases, see Hubalek and Kyprianou
\cite{Hubalek_Kyprianou} as well as Egami and Yamazaki \cite{Egami} in the case where $\{G_t,\ t\ge
0\}$ is a compound Poisson process with jumps following phase-type distribution. In fact the
following result combines both approaches given in Propositions \ref{prop_renewal_LT} and
\ref{approach_KP}, and theoretically gives a closed form expression of scale function
$W^{(\delta)}$ of any spectrally positive L\'evy process:
\begin{prop}\label{expr_scale_fc}
Scale function $W^{(\delta)}$ uniquely defined by Laplace transform (\ref{eq:Wq}) satisfies the
following first order differential equation
\begin{equation}\label{EDO_W}
W^{(\delta)'}(x)-\rho(\delta) W^{(\delta)}(x)=-\frac{\rho(\delta)}{\delta}\sum_{k=0}^\infty
g^{\star k}(\delta,.)\star h'(\delta,.) (\delta, x):=H(\delta,x)
\end{equation}
where $g(\delta,.)$ is given by (\ref{g}) and $h'(\delta,.)$ is derivative of $h(\delta,.)$ given
in (\ref{h}) with $w\equiv 1$, i.e.
\begin{multline}\label{hprime}
h'(\delta,y)=-[-2\mu/\sigma^2+\rho(\delta)] e^{-[-2\mu/\sigma^2+\rho(\delta)]y}+\frac{2}{\sigma^2}\int_y^\infty e^{-[-2\mu/\sigma^2+\rho(\delta)]
(x-y)}\bar{Q}(x) dx\\
-[-2\mu/\sigma^2+\rho(\delta)]\frac{2}{\sigma^2} \int_0^y e^{-[-2\mu/\sigma^2+\rho(\delta)] (y-s)} \int_s^\infty
e^{-\rho(\delta) (x-s)}\bar{Q}(x) dx ds.
\end{multline}
Thus $W^{(\delta)}(x)$ has the following explicit expression
\begin{equation}\label{exp_W}
W^{(\delta)}(x)=\int_0^x e^{-\rho(\delta)(x-y)}H(\delta,y)dy.
\end{equation}
\end{prop}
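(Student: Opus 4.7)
The plan is to identify $W^{(\delta)}$ by equating the two available expressions for $\phi(\delta,b)=\nbE[e^{-\delta T_b}]$: the scale-function formula (\ref{KP_T_b}) from Proposition \ref{approach_KP} and the Pollaczek--Khinchine series (\ref{PK_phi}) from Proposition \ref{prop_renewal_LT}, the latter specialised to the trivial penalty $w\equiv 1$ (so that $\omega(x)=\bar Q(x)$ and the $h(\delta,\cdot)$ entering the series is the one written in the statement).

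First I would substitute $Z^{(\delta)}(b)=1+\delta\int_0^b W^{(\delta)}(y)\,dy$ into (\ref{KP_T_b}) and equate the result with (\ref{PK_phi}), which gives
$$
1+\delta\int_0^b W^{(\delta)}(y)\,dy-\frac{\delta}{\rho(\delta)}W^{(\delta)}(b)\;=\;\sum_{k=0}^\infty g^{\star k}(\delta,\cdot)\star h(\delta,\cdot)(b).
$$
Since $W^{(\delta)}$ is differentiable (Remark \ref{scale_diff}, because $\sigma>0$), differentiating in $b$ is legitimate: the left-hand side becomes $\delta W^{(\delta)}(b)-(\delta/\rho(\delta))W^{(\delta)'}(b)$. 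On the right-hand side the derivative is pushed through each convolution onto the factor $h$, using that $g(\delta,0)=0$ (visible from (\ref{g})) so that $g^{\star k}(\delta,0)=0$ for every $k\ge 1$ and no boundary contributions of the form $g^{\star k}(\delta,0)h(\delta,b)$ appear. Dividing the resulting identity by $-\delta/\rho(\delta)$ then yields precisely the first order linear ODE (\ref{EDO_W}).

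The expression (\ref{hprime}) for $h'(\delta,y)$ is obtained by a straight Leibniz-rule differentiation of (\ref{h}) with $\omega=\bar Q$: the exponential prefactor produces the first term; differentiating with respect to the upper limit $y$ of the outer integral and evaluating at $s=y$ gives the boundary term $(2/\sigma^2)\int_y^\infty e^{-\rho(\delta)(x-y)}\bar Q(x)\,dx$; and differentiating the $y$-dependent exponential inside the outer integral delivers the last term. The closed form (\ref{exp_W}) then follows by solving (\ref{EDO_W}) as a standard linear first order ODE with integrating factor $e^{-\rho(\delta)x}$ and initial condition $W^{(\delta)}(0)=0$ supplied by Remark \ref{scale_boundary}.

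I expect the delicate step to be the justification of the termwise interchange of the derivative with the infinite series and with each convolution. This requires uniform-in-$b$ control on the iterates $g^{\star k}(\delta,\cdot)$, so that dominated convergence may be applied, which in turn rests on the $L^1$-contractive nature of $g(\delta,\cdot)$ for $\delta>0$ (the series (\ref{PK_phi}) converges geometrically fast), exactly as in the Pollaczek--Khinchine framework used in \cite{TW} and \cite{Garrido_Morales}.
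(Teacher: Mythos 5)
Your proposal is correct and follows essentially the same route as the paper: equate (\ref{KP_T_b}) with the Pollaczek--Khinchine series (\ref{PK_phi}) at $w\equiv 1$, differentiate in $b$ pushing the derivative through the convolutions onto $h$, and solve the resulting linear ODE with the boundary value $W^{(\delta)}(0)=0$; your added remark that $g(\delta,0)=0$ (so no boundary terms arise from the convolutions) just makes the paper's terse use of $(f\star g)'=f\star g'$ explicit. Incidentally, your Leibniz computation correctly produces the boundary term $\frac{2}{\sigma^2}\int_y^\infty e^{-\rho(\delta)(x-y)}\bar{Q}(x)\,dx$, which suggests the exponent displayed in (\ref{hprime}) carries a typo.
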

\begin{proof}
Differential equation (\ref{EDO_W}) simply comes from (\ref{KP_T_b}) that one differentiates with
respect to $b$ (which is possible since $W^{(\delta)}$ is differentiable in light of Remark
\ref{scale_diff}), using expression $\EE[e^{-\delta T_b}]=\phi_w(\delta,b)$ where penalty function
$w(.)$ is identically equal to $1$, and finally using Expression (\ref{PK_phi}). Note that
differentiation of (\ref{PK_phi}) is done by using the well known property of derivation of
convoluted functions $(f\star g)'=f'\star g=f\star g'$, explaining why $H(\delta,.)$ features
derivative of function $h(\delta,.)$.
\\
Since by Remark \ref{scale_boundary} one has that $W^{(\delta)}(0)=0$, Equation (\ref{exp_W}) is then obtained by
solving the standard first order differential equation (\ref{EDO_W}).
\end{proof}

Note however that Formula (\ref{exp_W}) requires to compute the infinite series appearing in (\ref{EDO_W}), which in practice may not be handy. However, since such scale functions are important in the theory of L\'evy processes (in particular, these functions will be useful in Sections \ref{Section_reflected} and  \ref{Section_last_passage} for determining quantities related to first passage times of reflected processes and last passage times), any expression can be considered as welcome.

\begin{remark}\label{TCL}
Asymptotic behaviour of $T_b$ as $b\to +\infty$ may be obtained through Roynette et al \cite{RVV}.
More precisely, it was proved that $(T_b+b/\varphi_D'(0))/\sqrt{b}$ converges in distribution to an
${\cal N}(0,- \varphi_D''(0)/\varphi_D'(0)^3)$ distribution. One can also compute from \cite{RVV}
asymptotic behaviour of triplet $\left(T_b+b/\varphi_D'(0))/\sqrt{b},\ D_{T_b}-b,\
b-D_{T_b-}\right)$that we did not include here but that involve technical expressions.
\end{remark}

\subsection{Examples}\label{subsection_examples}

We illustrate the previous study with examples and review some famous examples related to
degradation models.

\paragraph{Pure gamma process} Here we assume that $\sigma=0$ and that $\{G_t,\  t\ge 0\}$ is a gamma process with shape parameter $\alpha$ and scale parameter $\xi$. We recall that its L\'evy exponent and L\'evy measure are given
by
\begin{eqnarray*}
\varphi_G(u) & = &  \varphi_D(u)=-\alpha\log(1+u/\xi)\\
\nu_D(dx) & = & Q(dx)  =  x^{-1} e^{-\frac{x}{\xi}}\alpha dx.
\end{eqnarray*}
Considering this special case into the generalized Lundberg equation, it follows that this equation
has no positive solution. It appears that the presence of the perturbation in  the degradation
model is important for applying the result obtained by Tsai and Wilmott \cite{TW} as we did in Proposition
\ref{prop_renewal_LT}. However, in this first special case, the degradation process reduces to a
pure stationary gamma process and so $\{D_t,\  t\ge 0\}$ has increasing paths. It follows that:
$$
\forall t \ge 0 \;, \quad \PP[T_b>t] = \PP[D_t<b].
$$
Consequently it is sufficient to study the distribution of $D_t$ for any $t \ge 0$. The hitting
time distribution was already given for instance p.517 of Park and Padgett \cite{ParkPadgett}:

\begin{prop}[Park and Padgett \cite{ParkPadgett}]
The cumulative distribution function (cdf) of $T_b$ is:
$$
\forall t \ge 0 \;, \quad F(t) = \frac{\Gamma(\alpha t,b/\xi)}{\Gamma(\alpha t)},
$$
where $\Gamma(\cdot,\cdot)$ is the upper incomplete Gamma function. The probability distribution
function (pdf) of $T_b$ is, for any $t \ge 0$:
$$
f(t) = \alpha \left( \Psi(\alpha t)-\log\left(\frac{b}{\xi}\right)\right)\frac{\gamma(\alpha t,b/\xi)}{\Gamma(\alpha t)}
+ \frac{\alpha}{(\alpha t)^2 \Gamma(\alpha t)}\left( \frac{b}{\xi} \right)^{\alpha t} {}_2F_2(\alpha t,\alpha t;\alpha t+1,\alpha t+1;-b/\xi),
$$
where $\Psi$ is the di-gamma function (or logarithmic derivative of the Gamma function), $\gamma(\cdot,\cdot)=\Gamma(\cdot)-\Gamma(\cdot,\cdot)$ is the lower incomplete Gamma function and ${}_2F_2$ the generalized hypergeometric function of order $(2,2)$.
\end{prop}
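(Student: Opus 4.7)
The plan is to exploit the fact that in this sub-model $\{D_t\}=\{G_t\}$ is a subordinator, hence has non-decreasing sample paths, so passing the level $b$ is equivalent to $D_t\ge b$. This is the content of the remark $\PP[T_b>t]=\PP[D_t<b]$ already made in the paragraph preceding the proposition. The proof therefore reduces to a direct computation on the marginal distribution of $D_t$ (Gamma with shape $\alpha t$ and scale $\xi$) together with one delicate differentiation in $t$.

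First I would write
\[
F(t)=\PP[T_b\le t]=\PP[D_t\ge b]=\frac{1}{\xi^{\alpha t}\Gamma(\alpha t)}\int_b^\infty x^{\alpha t-1}e^{-x/\xi}\,dx
\]
and perform the change of variable $y=x/\xi$ to recognise
\[
F(t)=\frac{1}{\Gamma(\alpha t)}\int_{b/\xi}^\infty y^{\alpha t-1}e^{-y}\,dy=\frac{\Gamma(\alpha t,\,b/\xi)}{\Gamma(\alpha t)},
\]
which gives the cdf formula immediately.

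For the pdf I would differentiate in $t$ via the quotient rule. The denominator contributes the classical identity $\Gamma'(\alpha t)=\alpha\Gamma(\alpha t)\Psi(\alpha t)$. For the numerator, using $\Gamma(\alpha t,b/\xi)=\Gamma(\alpha t)-\gamma(\alpha t,b/\xi)$, the task reduces to evaluating $\partial_t\gamma(\alpha t,b/\xi)$. Starting from the series representation
\[
\gamma(a,z)=z^a\sum_{k=0}^\infty\frac{(-z)^k}{k!\,(a+k)},
\]
I would differentiate in $a$ term by term: the differentiation of $z^a$ produces the $\log(b/\xi)\,\gamma(\alpha t,b/\xi)$ term, and the differentiation of $1/(a+k)$ produces $-\sum_k(-z)^k/[k!(a+k)^2]$, which one rewrites via Pochhammer symbols as
\[
-\frac{z^a}{a^2}\,{}_2F_2(a,a;\,a+1,a+1;\,-z).
\]
Collecting with $z=b/\xi$ and $a=\alpha t$ and reassembling the quotient rule, everything combines into the announced expression for $f(t)$ after replacing $\gamma$ by $\Gamma-\gamma$ where the $\log(b/\xi)\Gamma(\alpha t)$ piece cancels against the digamma contribution.

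The main obstacle is the differentiation of $\gamma(a,z)$ with respect to its shape parameter; naively this is $\int_0^z y^{a-1}(\log y)e^{-y}dy$, which has no elementary closed form. Making the identification with the generalized hypergeometric function ${}_2F_2$ is the genuine content of the calculation, and care is needed to track signs and the factor $1/a^2$ arising from $\sum_k(-z)^k/[k!(a+k)^2]$. Once this identity is in hand, the rest is routine bookkeeping.
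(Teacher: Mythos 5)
Your proposal is correct, and it actually does more than the paper, which offers no proof at all: the proposition is imported verbatim from Park and Padgett (p.~517 of \cite{ParkPadgett}), the paper's only contribution being the monotonicity remark $\PP[T_b>t]=\PP[D_t<b]$ stated just before it, which you also use as your starting point. Your reduction to the Gamma$(\alpha t,\xi)$ marginal gives the cdf exactly as stated, and your computation of the pdf checks out: writing $a=\alpha t$, $z=b/\xi$, differentiating $F(t)=\Gamma(a,z)/\Gamma(a)$ in $t$ with $\Gamma'(a)=\Gamma(a)\Psi(a)$ and
\[
\partial_a\gamma(a,z)=\log(z)\,\gamma(a,z)-\frac{z^{a}}{a^{2}}\,{}_2F_2(a,a;a+1,a+1;-z)
\]
(obtained, as you say, from term-by-term differentiation of $\gamma(a,z)=z^a\sum_{k\ge0}(-z)^k/[k!(a+k)]$ and the identity $(a)_k/(a+1)_k=a/(a+k)$), the $\Gamma(a)\Psi(a)$ terms cancel in the quotient rule and one lands exactly on the announced expression $\alpha(\Psi(a)-\log z)\gamma(a,z)/\Gamma(a)+\alpha z^a\,{}_2F_2(a,a;a+1,a+1;-z)/(a^2\Gamma(a))$. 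The only step you should state explicitly rather than gesture at is the justification of differentiating the series in the parameter $a$ (local uniform convergence away from the poles $a=0,-1,-2,\dots$ makes this routine); with that remark your argument is a complete, self-contained replacement for the citation.
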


It has been proved (see \cite{AH:survey} or Section 5 of \cite{ShakedShanki} for instance) that
$T_b$ has an increasing failure rate. 

\paragraph{Perturbed gamma Process} Statistical inference in a perturbed gamma process has been studied in \cite{BPS} using only
degradation data. However sometimes both degradation data and failure time data are available (see
\cite{Axel} for such problem for a related model). In addition, from parameters estimation (based
on degradation data for instance), one can obtain an estimation of the failure time distribution.
Hence the distribution of $T_b$ can be of interest. In that case, $\{G_t,\  t\ge 0\}$ is a gamma process with
shape parameter $\alpha$ and scale parameter $\xi$. We recall that L\'evy exponent and L\'evy
measure of process $\{D_t,\  t\ge 0\}$ are then given by
\begin{equation}\label{car_pert_gamma}
\begin{array}{rcl}
\varphi_D(u)&=& -\alpha\log(1+u/\xi)+\frac{1}{2}u^{2}\sigma^{2}\\
\nu_D(dx) & = & Q(dx)  =  x^{-1} e^{-\frac{x}{\xi}}\alpha dx.
\end{array}
\end{equation}
Thus, Proposition \ref{prop_renewal_LT} gives joint distribution of $(T_b, D_{T_b-}, D_{T_b})$
through expression of $\phi(\delta,b)$ where $\omega(x):=\int_x^\infty w(x,y-x)\frac{e^{-y/\xi}}{y}
dy$ and $g(\delta,y)= \frac{2}{\sigma^2} \int_0^y e^{-\rho(\delta) (y-s)} \int_s^\infty
e^{-\rho(\delta) (x-s)} \frac{e^{-x/\xi}}{x}dx ds$, $w(.,.)$ being an arbitrary bounded function.
Also note that from Remark \ref{TCL} one has thanks to \cite{RVV} the Central Limit Theorem
$$
\frac{T_b-\xi b/\alpha}{\sqrt{b}}\stackrel{\cal D}{\longrightarrow} {\cal N}\left(0, \frac{\alpha/\xi^2+\sigma^2}{\alpha^3/\xi^3}\right), \quad b\to +\infty.
$$
Finally, expression of the scale function is then given by (\ref{exp_W}) with $\varphi_D(.)$ and $Q(.)$ defined in (\ref{car_pert_gamma}). This will come in handy in Section \ref{Section_last_passage}.

\paragraph{Brownian motion with positive drift} We consider the case where $G_t=\mu t$, i.e. $\{D_t,\  t\ge 0\}$ is a Brownian motion with drift. In such case, the distribution of the hitting time of the constant boundary $b$ is known and is called the inverse Gaussian distribution. Its pdf is given by:
$$
\forall t \ge 0 \;, f(t) = \frac{b}{\sqrt{\sigma^2 t^3}} \exp\left( -\frac{(b-\mu t)^2}{2t \sigma^2} \right) \;.
$$
Proof of this result is generally based on the symmetric principle full-filled by the Brownian motion when $\mu=0$, or can be showed with martingale methods in the case $\mu>0$. Alternatively the pdf can be obtained  by inverting the Laplace transform of $T_b$:
\begin{equation}\label{ex_BM}
\phi(\delta) = \EE[e^{-\delta T_b}] = \exp\left( -\frac{(\gamma_\delta-\mu)b}{\sigma^2} \right) \;,
\end{equation}
with $\gamma_\delta = \sqrt{\mu^2 + 2\delta\sigma^2}$. Note that the expression of this Laplace
transform is standard and can be found e.g. in Expression (38) p. 212 of \cite{Cox_Miller} (see also \cite{AH:survey}, page 19). Also
note that (\ref{ex_BM}) is compatible with Expression (\ref{PK_phi}). Indeed in the context of
Proposition \ref{prop_renewal_LT} we have here $g\equiv 0$ and $h\equiv 0$, thus (\ref{PK_phi})
reduces to $\phi(\delta,b)=e^{-[-2\mu/\sigma^2+\rho(\delta)]y}$ where $\rho(\delta)$ satisfies
(\ref{gen_Lundberg})$\iff 0=\frac{\sigma^2}{2}\rho(\delta)^2-\mu \rho(\delta) -\delta$, giving the
exact same expression (\ref{ex_BM}).

Expression of the scale function for this case is then given e.g. p.121 in \cite{Hubalek_Kyprianou} by
\begin{equation}\label{scale_BM}
W^{(\delta)}(x)=\frac{2}{\sqrt{2\delta \sigma^2+\mu^2}} e^{-\mu x/\sigma^2}\sinh \left(
\frac{x}{\sigma^2} \sqrt{2\delta \sigma^2+\mu^2}\right)=\frac{2}{\gamma_\delta} e^{-\mu
x/\sigma^2}\sinh \left( \frac{x}{\sigma^2} \gamma_\delta\right).
\end{equation}
Note that there seems to be a small mistake in \cite{Hubalek_Kyprianou} of expression of $W^{(\delta)}(x)$ (where there are some $\mu$'s instead of $\mu^2$'s), that we corrected here. As proved by Chhikara and Folks \cite{CF77}, the failure rate of an inverse Gaussian distribution
is non-monotone, but it is initially increasing and then decreasing.

\paragraph{Perturbed compound Poisson process with phase-type jumps} Let us suppose that $\{G_t,\ t\ge 0\}$
is a compound Poisson process of intensity $\lambda$ whose jumps are phase-type distributed with representation
$(m,{\bf \alpha},{\bf T})$. Let ${\bf t}:=- {\bf T} {\bf 1}$ where ${\bf 1}$ is a column vector
of which entries are equal to $1$'s of appropriate dimension (see e.g. Chapter VIII of Asmussen
\cite{Asmussen_ruinproba} for an extensive account on such distributions). In that case $\varphi_D$
is given by
$$
\varphi_D(u)=\frac{1}{2}u^{2}\sigma^{2}+\lambda({\bf \alpha}(uI-{\bf T})^{-1} {\bf t}-1).
$$
Egami and Yamazaki \cite{Egami} give the expression of the Laplace transform $\EE(e^{-\delta T_b})$
by determining a closed formula for the scale functions $W^{\delta}$ and using results in
Proposition \ref{approach_KP}. More precisely following \cite{Egami}, let us denote for all
$\delta>0$ the complex solutions $(\xi_{i,\delta})_{i}$ (resp. $(\eta_i)_i$) of Equation
$\varphi_D(u)=\delta$ (resp. $\delta/(\delta-\varphi_D(u))=0$), $u\in \nbC$. We suppose that the
$\xi_{i,\delta}$'s are distinct roots. We set
\begin{eqnarray*}
{\cal I}_\delta &:= & \{ i|\ \varphi_D(-\xi_{i,\delta})=\delta\mbox{ and } \Re (\xi_{i,\delta})>0\},\\
{\cal J}_\delta &:= & \{ i|\ \delta/(\delta-\varphi_D(-\eta_{i}))=0\mbox{ and } \Re (\eta_{i})>0\},\\
\varphi_\delta^- (u) & =& \frac{\prod_{j\in {\cal J}_\delta}(u+\eta_j)}{\prod_{j\in {\cal
J}_\delta}\eta_j} \frac{\prod_{i\in {\cal I}_\delta}\xi_{i,\delta}}{\prod_{i\in {\cal
I}_\delta}(u+\xi_{i,\delta})}.
\end{eqnarray*}
On page 4 of \cite{Egami} it is stated that $\mbox{Card}({\cal I}_\delta)=\mbox{Card}({\cal J}_\delta)+1$ (this
results in fact comes from Lemma 1 (1) of \cite{Asmussen_Avram_Pistorius}), so that
$\varphi_\delta^- (\infty)$ exists and is equal to $0$. We then define
\begin{eqnarray*}
(A_{i,\delta})_{i\in {\cal I}_\delta}& \mbox{ s.t. }& 
\varphi_\delta^- (u)-\varphi_\delta^- (\infty)=\varphi_\delta^- (u)=\sum_{i\in {\cal I}_\delta} A_{i,\delta} \frac{\xi_{i,\delta}}{\xi_{i,\delta}+u},\\
\varrho_\delta &:=& \sum_{i\in {\cal I}_\delta} A_{i,\delta}\xi_{i,\delta}.
\end{eqnarray*}
Then Proposition 2.1 of \cite{Egami} gives expression of the Laplace transform $
\phi(\delta)=\EE(e^{-\delta T_b})= \sum_{i\in {\cal I}_\delta} A_{i,\delta}e^{-\xi_{i,\delta}(x-a)}
$and Proposition 3.1 of \cite{Egami} yields the following interesting and useful expression of
the scale function
\begin{equation}\label{scale_PH}
W^{(\delta)}(x)=\frac{2}{\sigma^2\varrho_\delta}\sum_{i\in {\cal I}_\delta}
A_{i,\delta}\frac{\xi_{i,\delta}}{\rho(\delta)+\xi_{i,\delta}}\left[e^{\rho(\delta)x} -
e^{-\xi_{i,\delta} x}\right].
\end{equation}
Furthermore, as pointed out in \cite{Egami}, expressions of $W^{(\delta)}$ are more complicated but
available when roots $\xi_{i,\delta}$'s have multiplicity $m_i>1$.

\subsection{Reflected processes}\label{Section_reflected}

The previous model may not be too realistic if we consider the Brownian motion as a means of
modelling small repairs, as the degradation process $\{D_t,\  t\ge 0\}$ may then be negative. An alternative
for this is to consider the reflected version of $\{D_t,\  t\ge 0\}$ defined in the following way
$$
\forall t \ge 0,\quad D^*_t:=D_t-\inf_{0\le s \le t} (D_s\wedge 0).
$$
The hitting time distribution $T^*_b$ of $\{D^*_t,\  t\ge 0\}$ jointly to the overshoot and undershoot pdf is given by the following theorem
\begin{thm}\label{thm_reflected}
Let us suppose that $\{D_t,\ t\ge 0 \}$ is non monotone, i.e. that $\sigma>0$. Let $W^{(\delta)}$
be defined by (\ref{eq:Wq}) where we recall that $\rho=\rho(\delta)$ is solution to the Lundberg
equation $\varphi_D(z)=\delta$. Then
\begin{equation}\label{reflected_LT}
\EE[e^{-\delta T_b^*}\nbu_{\{D^*_{T_b^*-}\in dy,\ D^*_{T_b^*}\in dz\}}]=\nu_D(dz-y)
\hat{r}_b^{(\delta)}(b,y)dy
\end{equation}
where $\displaystyle \hat{r}_b^{(\delta)}(b,y):=\frac{W^{(\delta)}(b) W^{(\delta)'}(y)}{W^{(\delta)
'}(b)}-W^{(\delta)}(y)$.
\end{thm}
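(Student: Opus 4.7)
The plan has two independent pieces: first, show that the joint law of $(T_b^*, D^*_{T_b^*-},D^*_{T_b^*})$ decomposes as the product of $\nu_D(dz-y)$ and a density in $y$ (this is the compensation-formula step, and pins down the shape of the answer); second, identify that density in $y$ with $\hat r_b^{(\delta)}(b,y)$ by invoking known resolvent formulas for reflected spectrally one-sided L\'evy processes.

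\textbf{Step 1 (compensation formula).} The reflection at $0$ only adds a continuous, non-decreasing local-time push, so the jumps of $\{D^*_t\}$ coincide pathwise with those of $\{D_t\}$; in particular $D^*$ has the same L\'evy measure $\nu_D$. Moreover, by definition of $T_b^*$, the event $\{D^*_{T_b^*-}\in dy,\ D^*_{T_b^*}\in dz\}$ with $y<b\le z$ forces the crossing of $b$ to take place by a jump of $D$ of size $z-y$, from a state $D^*_{s-}=y\in[0,b)$, at time $s=T_b^*$. Applying the L\'evy system (or Itô's compensation formula) to the non-negative functional $s\mapsto e^{-\delta s} \mathbbm{1}_{\{s\le T_b^*\}}\,\mathbbm{1}_{\{D^*_{s-}\in dy,\, D^*_{s-}+\Delta D^*_s\in dz\}}$ yields
\begin{equation*}
\EE\left[e^{-\delta T_b^*}\mathbbm{1}_{\{D^*_{T_b^*-}\in dy,\,D^*_{T_b^*}\in dz\}}\right] = \nu_D(dz-y)\,r^{(\delta)}(y)\,dy,
\end{equation*}
where $r^{(\delta)}(y)dy:=\int_0^\infty e^{-\delta s}\,\PP_0[D^*_s\in dy,\,s<T_b^*]\,ds$ is the $q$-resolvent density of $D^*$ killed at $T_b^*$ and starting from $0$.

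\textbf{Step 2 (identification of $r^{(\delta)}$).} It remains to show $r^{(\delta)}(y)=\hat r_b^{(\delta)}(b,y)$. This is exactly the statement of the $q$-potential density of the spectrally one-sided L\'evy process reflected at its infimum and killed at first passage above $b$, a result originally due to Pistorius (see also Avram--Kyprianou--Pistorius and Chapter~8 of \cite{Kyprianou_book}). Concretely, for a spectrally negative process reflected at its infimum, the $q$-resolvent density starting from $x\in[0,b]$ is
\begin{equation*}
u^{(\delta)}(x,y)=\frac{W^{(\delta)}(x)\,W^{(\delta)\prime}(b-y)}{W^{(\delta)\prime}(b)}-W^{(\delta)}(x-y).
\end{equation*}
Applying this result to the spectrally negative dual $\tilde D=-D$ used already in Proposition \ref{approach_KP}, via the sign change and spatial inversion $y\mapsto b-y$ that maps the reflection at the supremum (for $\tilde D$, starting from $b$) to the reflection at the infimum (for $D$, starting from $0$), one obtains
\begin{equation*}
r^{(\delta)}(y)=\frac{W^{(\delta)}(b)\,W^{(\delta)\prime}(y)}{W^{(\delta)\prime}(b)}-W^{(\delta)}(y)=\hat r_b^{(\delta)}(b,y),
\end{equation*}
completing the proof. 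Note that Remarks \ref{scale_diff} and \ref{scale_boundary}, guaranteeing that $W^{(\delta)}$ is differentiable and that $W^{(\delta)}(0)=0$, ensure that all the scale-function objects above are well defined.

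\textbf{Main obstacle.} Step 1 is routine once one observes that $D^*$ inherits the jumps and the L\'evy measure of $D$. The real work is Step 2: one must reconcile the authors' notational convention (scale function of $-D$ defined via $\varphi_D$ and starting point $b$, as in Proposition \ref{approach_KP}) with the standard spectrally-negative, reflected-at-infimum Pistorius formula, which requires carefully tracking the duality (sign change combined with the involution $y\mapsto b-y$). The creeping component of the crossing (i.e.\ the Brownian contribution at level $b$) is not seen by the expression in the statement because it contributes a singular mass at $\{y=z=b\}$ rather than an absolutely continuous piece, and is therefore implicitly excluded by the form of the right-hand side of \eqref{reflected_LT}.
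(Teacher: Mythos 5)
Your proposal follows essentially the same route as the paper: the paper's proof sets $X=-D$, observes that $D^*_t=\sup_{0\le s\le t}(X_s\vee 0)-X_t$ is the spectrally negative dual reflected at its \emph{supremum}, and then invokes Remark 4 of Doney \cite{Doney} -- which is exactly the compensation-formula identity of your Step 1, giving $\nu_D(dz-y)$ times the resolvent density of the killed reflected process -- together with Pistorius' expression for that resolvent. Your conclusion and your remark about the creeping contribution being a singular mass at $y=z=b$ are both consistent with this.

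The one point to repair is the identification in Step 2. Since $D^*$ is \emph{pathwise equal} to the dual reflected at its supremum started at $0$, no spatial inversion is needed: the result to quote is the $\delta$-resolvent density of the spectrally negative process reflected at its supremum, killed on exiting $[0,b]$, namely $W^{(\delta)}(b-x)W^{(\delta)\prime}(y)/W^{(\delta)\prime}(b)-W^{(\delta)}(y-x)$ at $x=0$ (Pistorius \cite{Pistorius}; Eq.\ (15) in Theorem 1 of \cite{Doney}), which is $\hat r_b^{(\delta)}(b,y)$. The formula you state for ``a spectrally negative process reflected at its infimum'' is not that process's killed resolvent: at $x=0$ it would vanish identically because $W^{(\delta)}(0)=0$ (Remark \ref{scale_boundary}), which is absurd for a process reflected upward at $0$; the genuine reflected-at-infimum resolvent involves $Z^{(\delta)}$ (Pistorius, cf.\ also \cite{Kyprianou_book}). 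What you wrote is in fact the reflected-at-supremum formula after the flip $(x,y)\mapsto(b-x,b-y)$, which is why your final expression nevertheless comes out correct; the citation and the duality bookkeeping should be corrected accordingly.
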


\begin{proof}
We apply results from Doney \cite{Doney} and we write, following notations therein, $X_t:=-D_t$, so
that L\'evy measure of $\{X_t,\  t\ge 0\}$ is $\Pi(dx):=\nu_D(-dx)$and process $\hat{Y}(t):=\sup_{0\le s \le
t} (X_s\vee 0)-X_t$ is equal to $D^*_t$. Following terminology of \cite{Doney}, $W^{(\delta)}$ is
the $\delta$-scale function of $\{X_t,\  t\ge 0\}$ and is defined by (\ref{eq:Wq}) with $\varphi_{-D}$ instead of $\varphi_{D}$. Remark 4 p.14 of
\cite{Doney} gives expression (\ref{reflected_LT}) where $\hat{r}_b^{(\delta)}$ is given by
Pistorius \cite{Pistorius} (see also Expression (15) in Theorem 1 of \cite{Doney}) with $x:=0$ and
$a:=b$, noting that function $W^{(\delta)}$ is differentiable by Remark \ref{scale_diff}.
\end{proof}

Note again that Theorem \ref{thm_reflected} is especially interesting when function $W^{(\delta)}$
admits closed form expressions, as in \cite{Hubalek_Kyprianou, Egami}. For example in the case of a
perturbed compound Poisson process with phase-type distributed jumps (and using the same notations as in Section
\ref{subsection_examples}) we have $\nu_D(dz)=\lambda {\bf \alpha} e^{{\bf T}z}{\bf t}$ and
$W^{(\delta)}$ given by  (\ref{scale_PH}) (of which derivative is easily available), which, plugged
in (\ref{reflected_LT}), easily yields the Laplace transform of the corresponding hitting time
$T_b^*$ jointly to the overshoot and undershoot distribution.

We now state a famous lemma that links distribution of $D^*_t$ to the cumulative distribution function of $T_b$ for all $b \ge 0$:
\begin{lemm}\label{duality_lemma}
We have for all $b$ and $t \ge 0$, $\PP(D^*_t>b)=\PP(T_b\le t)$.
\end{lemm}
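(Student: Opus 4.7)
The plan is to reduce the event $\{D^*_t > b\}$ to a statement about the running maximum $\sup_{0\le s\le t} D_s$ of the underlying process, and then appeal to the classical duality (time-reversal) property of L\'evy processes.

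First I would observe that, because $D_0 = 0$, the infimum $\inf_{0\le s\le t}(D_s\wedge 0)$ coincides with $\inf_{0\le s\le t} D_s$ (both are non-positive and equal). Consequently
$$
D^*_t = D_t - \inf_{0\le s\le t} D_s = \sup_{0\le s\le t}(D_t - D_s),
$$
so that $\PP(D^*_t > b) = \PP\bigl(\sup_{0\le s\le t}(D_t - D_s) > b\bigr)$.

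Next I would invoke the duality property of L\'evy processes. For fixed $t$, the reversed path $\{D_t - D_{(t-s)-}\}_{0\le s\le t}$ has the same finite-dimensional distributions as $\{D_s\}_{0\le s\le t}$ (see e.g. Chap.~II of \cite{Bertoin}), and the supremum functional is insensitive to the countably many left-limit modifications. This gives the identity in law
$$
\sup_{0\le s\le t}(D_t - D_s) \ \mbox{has the same distribution as} \ \sup_{0\le s\le t} D_s,
$$
so that $\PP(D^*_t > b) = \PP\bigl(\sup_{0\le s\le t} D_s > b\bigr)$.

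Finally I would match the right-hand side with the definition of $T_b$: by right-continuity of sample paths, $\{T_b\le t\}=\{\sup_{0\le s\le t} D_s \ge b\}$, and the symmetric difference $\{\sup_{0\le s\le t} D_s = b\}$ has probability zero for $b>0$, because the Gaussian component ($\sigma>0$) ensures that the law of the supremum is atomless on $(0,\infty)$ (and in the degenerate monotone case one has trivially $\{T_b\le t\}=\{D_t\ge b\}=\{D^*_t\ge b\}$). The main subtlety is the time-reversal step, where one must verify that replacing the c\`adl\`ag path by its left-limit dual does not change the supremum; but this is a standard consequence of the duality identity quoted above, and the rest of the argument is essentially bookkeeping.
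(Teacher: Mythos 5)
Your proof is correct and follows essentially the same route as the paper: the identity in law between $D^*_t$ and $\sup_{0\le s\le t}D_s$ is precisely the duality lemma (Lemma 3.5 of \cite{Kyprianou_book}) that the paper cites, and your time-reversal argument is simply an unpacking of that citation, followed by the same identification of $\{\sup_{0\le s\le t}D_s>b\}$ with $\{T_b\le t\}$. Your extra care with the boundary event $\{\sup_{0\le s\le t}D_s=b\}$ (negligible since $\sigma>0$ makes the law of the supremum atomless) is a detail the paper passes over silently, not a different approach.
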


\begin{proof}
This is a simple consequence from e.g. Lemma 3.5 p.74 of Kyprianou \cite{Kyprianou_book}that implies that $\PP(D^*_t>b)=\PP\left(\sup_{0\le s \le t}D_s>b\right)$ which in turn is equal to $\PP(T_b\le t)$.
\end{proof}

\section{Last-passage time as failure time}\label{Section_last_passage}

We let $L_b$ and $L^*_b$ be the last passage times of processes $\{D_t,\  t\ge 0\}$ and $\{D^*_t,\  t\ge 0\}$ below level $b$ defined as
$$
L_b:=\sup\{ 0\le u|\ D_u \le b \} \quad  {\mbox{and}} \quad L_b^*:= \sup\{ 0\le u|\ D_u^* \le b \}
$$
which are well defined as processes $\{D_t,\  t\ge 0\}$ and $\{D^*_t,\  t\ge 0\}$ satisfy $\lim_{t\to +\infty} D_t=
\lim_{t\to +\infty} D_t^*=+\infty$.

\subsection{General case}

Let us introduce the following bivariate measures $\cal U$ and $\hat{\cal U}$ on $[0,+\infty)^2$
through their double Laplace transforms
\begin{equation}\label{calU}
\int_0^\infty \!\!\!\int_0^\infty e^{-\alpha s -\beta x} {\cal
U}(ds,dx)=\frac{\rho(\alpha)-\beta}{\alpha-\varphi_D(\beta)},\ \forall\beta>\rho(\alpha),\quad
\int_0^\infty \!\!\!\int_0^\infty e^{-\alpha s -\beta x} \hat{\cal
U}(ds,dx)=\frac{1}{\rho(\alpha)+\beta},\ \forall \beta,\alpha \ge 0.
\end{equation}
Expressions (\ref{calU}) may be found in Expressions (12) and (13) of \cite{Biffis_Morales}, or
p.154 and p.170 in Chapter 6 of \cite{Kyprianou_book} (note that the latter reference considers
spectrally negative processes, hence roles for ${\cal U}$ and $\hat{\cal U}$ are swapped therein).
Furthermore, from (26) of \cite{Biffis_Morales} one has that $\hat{\cal U}_\delta(dx):=
\int_{s=0}^\infty e^{-\delta s}\hat{\cal U}(ds,dx)$ has the expression
\begin{equation}\label{calU_hat}
\hat{\cal U}_\delta(dx)=e^{-\rho(\delta)x}dx,
\end{equation}
hence $\hat{\cal U}_\delta([0,+\infty))=1/\rho(\delta)$. In the same spirit, we define ${\cal
U}_\delta(dx):=\int_{s=0}^\infty e^{-\delta s}{\cal U}(ds,dx)$. (\ref{calU}) then reads that
$\int_{x=0}^\infty e^{-\beta x}{\cal
U}_\delta(dx)=\frac{\rho(\delta)-\beta}{\delta-\varphi_D(\beta)}$ for all $\beta>\rho(\delta)$. We
then have the following identity, that will be of interest later on.
\begin{lemm}\label{prop_calU_LT}
One has \begin{equation}\label{calU_LT}
{\cal U}_\delta(dx)=[- \rho(\delta)W^{(\delta)}(x) + W^{(\delta)'}(x)]dx.
\end{equation}
\end{lemm}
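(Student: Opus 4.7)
The plan is to verify the stated identity by matching Laplace transforms and then invoking uniqueness. Since ${\cal U}_\delta$ is uniquely characterized on $[0,+\infty)$ by the Laplace transform
\[
\int_0^\infty e^{-\beta x}{\cal U}_\delta(dx)=\frac{\rho(\delta)-\beta}{\delta-\varphi_D(\beta)},\qquad \beta>\rho(\delta),
\]
it suffices to show that the absolutely continuous measure with density $x\mapsto -\rho(\delta) W^{(\delta)}(x) + W^{(\delta)'}(x)$ has the same Laplace transform on the same half-line $\beta>\rho(\delta)$.

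First I would compute the Laplace transform of $W^{(\delta)}$ directly from its defining relation (\ref{eq:Wq}): for $\beta>\rho(\delta)$,
\[
\int_0^\infty e^{-\beta x} W^{(\delta)}(x)\,dx = \frac{1}{\varphi_D(\beta)-\delta}.
\]
Next I would compute the Laplace transform of the derivative $W^{(\delta)'}$ via integration by parts. This is legitimate because, by Remark \ref{scale_diff}, $W^{(\delta)}$ is (twice) differentiable under our standing assumption $\sigma>0$, and by Remark \ref{scale_boundary} we have the boundary value $W^{(\delta)}(0)=0$. Hence
\[
\int_0^\infty e^{-\beta x} W^{(\delta)'}(x)\,dx = \beta \int_0^\infty e^{-\beta x} W^{(\delta)}(x)\,dx - W^{(\delta)}(0) = \frac{\beta}{\varphi_D(\beta)-\delta}.
\]

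Combining the two expressions gives, for every $\beta>\rho(\delta)$,
\[
\int_0^\infty e^{-\beta x}\bigl[-\rho(\delta) W^{(\delta)}(x) + W^{(\delta)'}(x)\bigr]\,dx = \frac{\beta-\rho(\delta)}{\varphi_D(\beta)-\delta} = \frac{\rho(\delta)-\beta}{\delta-\varphi_D(\beta)},
\]
which matches the Laplace transform of ${\cal U}_\delta$. Since two finite measures on $[0,+\infty)$ whose Laplace transforms agree on a half-line are equal, we conclude that ${\cal U}_\delta(dx)=[-\rho(\delta)W^{(\delta)}(x)+W^{(\delta)'}(x)]\,dx$.

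The only subtle point is the integration by parts for $W^{(\delta)'}$: we need both $W^{(\delta)}(0)=0$ and enough regularity and integrability to justify the boundary term at infinity vanishing. The boundary value is handed to us by Remark \ref{scale_boundary}, differentiability by Remark \ref{scale_diff}, and the decay $e^{-\beta x}W^{(\delta)}(x)\to 0$ as $x\to\infty$ (for $\beta>\rho(\delta)$) follows from the known exponential growth rate $W^{(\delta)}(x)=O(e^{\rho(\delta)x})$ inherent in the Laplace transform representation (\ref{eq:Wq}). Beyond that, the argument is routine.
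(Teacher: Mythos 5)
Your proof is correct and takes essentially the same route as the paper: expand the target Laplace transform $\frac{\rho(\delta)-\beta}{\delta-\varphi_D(\beta)}$ using (\ref{eq:Wq}), integrate by parts with $W^{(\delta)}(0)=0$ (Remark \ref{scale_boundary}) and differentiability (Remark \ref{scale_diff}), and conclude by uniqueness of Laplace transforms on a half-line. The one point you gloss over is precisely where the paper spends its effort: the vanishing of the boundary term at infinity is not "inherent" in (\ref{eq:Wq}) alone (a finite transform for $\beta>\rho(\delta)$ gives no pointwise bound without extra input such as monotonicity of $W^{(\delta)}$); the paper justifies it via the identity $W^{(\delta)}(x)=e^{\rho(\delta)x}W^{(0)}_{\rho(\delta)}(x)$ with $W^{(0)}_{\rho(\delta)}(+\infty)=1/\varphi_D'(\rho(\delta))<\infty$, and you should invoke such a fact explicitly.
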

\begin{proof}
From (\ref{eq:Wq}) we get the following
\begin{equation}\label{to_integrate2}
\int_{x=0}^\infty e^{-\beta x}{\cal U}_\delta(dx)= - \rho(\delta) \int_0^\infty e^{-\beta x} W^{(\delta)}(x)dx + \int_0^\infty \beta e^{-\beta x} W^{(\delta)}(x)dx
\end{equation}
where $\beta>\rho(\delta)$. We recall from Remark \ref{scale_boundary} that $W^{(\delta)}(0)=0$. As
to behaviour at $+\infty$ of the scale function, we have, thanks to Lemma 8.4 p.222 of
\cite{Kyprianou_book}, relation $W^{(\delta)}(x)=e^{cx}W_c^{(\delta-\varphi_D(c))}(x)$, for any
$c\in\RR$ such that $\delta-\varphi_D(c)\ge 0$, where $W_c^{(\delta-\varphi_D(c))}$ is a scale
function defined under a different probability measure. By picking $c=\rho(\delta)$ then one gets
$\delta-\varphi_D(c)= 0$ and
\begin{equation}\label{shifted}
W^{(\delta)}(x)=e^{\rho(\delta)x}W_{\rho(\delta)}^{(0)}(x)
\end{equation}
(see e.g. Second Remark p.32 of \cite{Pistorius2} for this identity as well as details on this
other probability measure). At the end of Proof of Corollary 8.9 p.227 of \cite{Kyprianou_book}, it
is shown that $W_{\rho(\delta)}^{(0)}(+\infty)= \frac{1}{\varphi_{D,\rho(\delta)}'(0+)}$ where
$\varphi_{D,\rho(\delta)}(q):=\varphi_D(q+\rho(\delta))
-\varphi_D(\rho(\delta))=\varphi_D(q+\rho(\delta)) -\delta$, hence
\begin{equation}\label{shifted2}
W_{\rho(\delta)}^{(0)}(+\infty)= \frac{1}{\varphi_D '(\rho(\delta)) }<+\infty .
\end{equation}
Thus in view of $W^{(\delta)}(0)=0$, (\ref{shifted}) and (\ref{shifted2}), and since
$\beta>\rho(\delta)$, the following integration by parts makes sense:
\begin{equation}\label{IPP}
\int_0^\infty \beta e^{-\beta x} W^{(\delta)}(x)dx= \left[ -e^{-\beta x} W^{(\delta)}(x)\right]_0^\infty + \int_0^\infty e^{-\beta x} W^{(\delta)'}(x)dx = 0 + \int_0^\infty e^{-\beta x} W^{(\delta)'}(x)dx,
\end{equation}
remembering that $W^{(\delta)}$ is indeed differentiable by Remark \ref{scale_diff}. Comparing
Laplace transforms (\ref{to_integrate2}) and (\ref{IPP}), we then obtain (\ref{calU_LT}).
\end{proof}

Let us also note that, according to Definition 6.4 p.142 of \cite{Kyprianou_book}, the fact that $\sigma>0$ entails that $0$ is regular for sets $(-\infty,0)$ and $(0,+\infty)$ (in particular, Theorem 6.5 p.142 of \cite{Kyprianou_book} applies here). With that in mind, and since $\{D_t,\ t\ge 0\}$ is spectrally positive and drifts to $+\infty$, we may recall the following important recent result from Kyprianou {\it et al} \cite{KPR}.
\begin{thm}[Corollary 2 of Kyprianou, Pardo and Rivero \cite{KPR}]\label{Coro_KPR}
Let us define
\begin{eqnarray*}
\underline{D}_\infty:=\inf_{u\ge 0}D_u, &\quad & \underline{D}_s=\inf_{t\le s}D_s,\quad
\underline{G}_\infty:=\sup\{ s\ge 0|\ D_s-\underline{D}_s=0\},\\
\underrightarrow{D}_t:= \inf_{ s>t} D_s,&\quad &
\underrightarrow{\cal D}_t:= \inf\{ s>t|\ D_s-\underrightarrow{D}_t=0\}.
\end{eqnarray*}
Then distribution of $(\underline{G}_\infty, \underline{D}_\infty, \underrightarrow{\cal
D}_{L_b}-L_b, L_b, \underrightarrow{D}_{L_b}-b, b-D_{L_b-}, D_{L_b}-b)$ is given by the following
identity for $t,b,v>0$, $ s>r>0$, $0\le y<b+v$, $w\ge u>0$:
\begin{multline}
\PP[\underline{G}_\infty \in dr,\ -\underline{D}_\infty\in dv,\ \underrightarrow{\cal
D}_{L_b}-L_b\in dt,\ L_b\in ds,\ \underrightarrow{D}_{L_b}-b\in du,
 b-D_{L_b-}\in dy,\ D_{L_b}-b\in dw]\\
 =\hat{\cal U}_\delta([0,+\infty))^{-1}\hat{\cal U}(dr,dv){\cal U}(ds-r,b+v-dy)\hat{\cal U}(dt,w-du)Q(dw+y),\label{n_uple}
\end{multline}
where $\hat{\cal U}_\delta([0,+\infty))^{-1}=\rho(0)$ from (\ref{calU_hat}).
\end{thm}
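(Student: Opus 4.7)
The plan is to derive (\ref{n_uple}) by decomposing the trajectory of $\{D_t\}$ at the two privileged times $\underline{G}_\infty$ and $L_b$, which produces four conditionally independent pieces whose laws can each be matched with the Wiener--Hopf objects $\hat{\cal U}$, ${\cal U}$ and $Q$ already introduced in (\ref{calU}) and (\ref{calU_hat}). Because $\sigma>0$, the point $0$ is regular for both $(-\infty,0)$ and $(0,+\infty)$, and since $\{D_t\}$ drifts to $+\infty$ the overall infimum $\underline{D}_\infty$ is finite and attained at the unique time $\underline{G}_\infty$, with $\underline{G}_\infty<L_b<+\infty$ almost surely. These two facts let one speak unambiguously of the three successive path fragments and guarantee that the indicated variables are well defined.

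First I would reverse time in the pre-infimum segment $\{D_s:\,0\le s\le\underline{G}_\infty\}$ and appeal to the standard duality between the ascending and descending ladder processes of a spectrally positive L\'evy process. This identifies the law of $(\underline{G}_\infty,-\underline{D}_\infty)$ with the bivariate descending ladder potential, which after (\ref{calU}) is precisely $\hat{\cal U}$, and which integrates to $1/\rho(0)$ by (\ref{calU_hat}); so $\rho(0)\,\hat{\cal U}(dr,dv)$ is a bona fide probability measure in $(r,v)$ and accounts for the single normalizing constant in (\ref{n_uple}).

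Next I would use the post-infimum decomposition of Millar: conditionally on $(\underline G_\infty,-\underline D_\infty)=(r,v)$, the process $\hat D_s:=D_{\underline G_\infty+s}+v$ is an independent L\'evy process of the same law conditioned to stay strictly positive. Its last passage time below the level $b+v$ equals $L_b-r$, the corresponding undershoot $b+v-\hat D_{(L_b-r)-}$ coincides with $b-D_{L_b-}=y$, and the joint law of this pair is precisely the bivariate ascending ladder renewal measure ${\cal U}$ of (\ref{calU}), yielding the factor ${\cal U}(ds-r,\,b+v-dy)$. The crossing jump is then dealt with via the compensation formula for the Poisson random measure of jumps of $D$ (equivalently, via its L\'evy system): on the event $D_{L_b-}=b-y$, the jump $\Delta D_{L_b}=D_{L_b}-D_{L_b-}=y+w$ that definitively carries $D$ above $b$ has intensity $Q$, and reading this as a measure in $w$ at fixed $y$ produces the factor $Q(dw+y)$. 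Finally, by the strong Markov property at $L_b$ the residual process $\tilde D_s:=D_{L_b+s}-D_{L_b}$ is an independent copy of $D$, drifting to $+\infty$ and reaching its own infimum $\underrightarrow{D}_{L_b}-D_{L_b}=u-w\le 0$ at relative time $\underrightarrow{\cal D}_{L_b}-L_b=t$; applying the argument of the first step to $\tilde D$ identifies the law of $(t,w-u)$ with $\hat{\cal U}(dt,\,w-du)$.

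I expect the main obstacle to lie in Step~2, namely justifying rigorously that the functional of the post-infimum process encoding the excursion which finally exceeds level $b+v$ is governed by the ascending ladder potential ${\cal U}$, and that this middle piece is independent of both the pre-$\underline{G}_\infty$ and post-$L_b$ fragments. This is the technical heart of \cite{KPR} and rests on Ito's Poisson point process of excursions from the infimum combined with the $h$-transform representation of the L\'evy process conditioned to stay positive; once these ingredients are in place, assembling the four factors via the conditional independence they furnish delivers (\ref{n_uple}) directly.
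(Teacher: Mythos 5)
The paper itself does not prove this statement: it is quoted verbatim as Corollary 2 of Kyprianou, Pardo and Rivero \cite{KPR}, the authors only checking that its hypotheses hold here (spectral positivity, regularity of $0$ due to $\sigma>0$, drift to $+\infty$). So your proposal is to be judged against the original argument of \cite{KPR}, and at the level of architecture it does mirror it: decomposition at the global infimum (Millar's post-infimum decomposition, the process conditioned to stay positive), identification of $(\underline{G}_\infty,-\underline{D}_\infty)$ with the descending ladder potential $\hat{\cal U}$, of the fragment up to $L_b$ with the ascending ladder potential ${\cal U}$, of the crossing jump with $Q$ via the compensation formula, and of the post-$L_b$ fragment again with $\hat{\cal U}$.

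As a proof, however, it has genuine gaps. First, you invoke ``the strong Markov property at $L_b$'': $L_b$ is a last passage time, not a stopping time, so the strong Markov property is not available, and the post-$L_b$ increment process is not an unconditioned independent copy of $D$ --- by the very definition of $L_b$ it is constrained never to return below $b$. Making this decomposition rigorous (via last-exit decompositions in the sense of Getoor--Sharpe, It\^o excursion theory, or, as in \cite{KPR}, time reversal combined with the Doney--Kyprianou quintuple law at first passage) is exactly the content of the theorem; in particular the restriction to $u>0$ and the appearance of a \emph{single} normalizing constant $\rho(0)$, rather than one constant per fragment, is where the bookkeeping must be done. Your sketch instead assigns to each fragment a ``law'' equal to a $\sigma$-finite measure (${\cal U}$ or $\hat{\cal U}$) and multiplies them, which cannot be literally correct and conceals precisely this normalization issue. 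Second, you yourself flag Step~2 --- that the piece between $\underline{G}_\infty$ and $L_b$ is governed by ${\cal U}$ jointly with the undershoot, and is independent of the other pieces --- as the technical heart, and you defer it to excursion theory and $h$-transforms without carrying it out; the same deferral applies to justifying the factor $Q(dw+y)$ jointly with the undershoot $y$ via the compensation formula inside that framework. As written, therefore, the proposal is an outline of the strategy of \cite{KPR} rather than a proof; to be complete it would need the excursion-theoretic core and the normalization argument made explicit, or else one should simply cite \cite{KPR} as the paper does.
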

It is clear that distribution of $(L_b, b-D_{L_b-}, D_{L_b}-b)$ may be theoretically obtained from
this theorem. In fact, our goal is to propose expressions of this distribution that only involves
quantities that were determined in Section \ref{Section_General_Case}, e.g. scale functions, which
we saw can be available in many situations, as opposed to measures $\cal U$ and $\hat{\cal U}$
appearing in (\ref{n_uple}) which, as seen in (\ref{calU}), are available only through double
Laplace transforms. More precisely, we have the following results.
\begin{thm}\label{thm_Last_D_T}
We have for all $t\ge 0$ and $a\in \RR$,
\begin{eqnarray}
\PP(L_b<t)                 & = & \int_b^\infty \EE[D_1]W(a-b)f_{D_t}(a)da \label{Last_D_T} \\
\PP(L_b\ge t, \ D_t\in da) & = & [1-\EE[D_1]W(a-b)]f_{D_t}(a)da \label{Last_D_T_density}
\end{eqnarray}
where $f_{D_t}(.)$ is density of r.v. $D_t$ and $W(.)=W^{(\delta)}(.)$ defined in (\ref{eq:Wq})
with $\delta=0$. Besides, for all $\delta\ge 0$, and for $b>y\ge 0$, $w>0$, the Laplace transform
of $L_b$ jointly to density of the under and overshoot is given by
\begin{equation}\label{Last_D_T_over_under}
\EE[e^{-\delta L_b}\nbu_{\{b-D_{L_b-}\in dy,\ D_{L_b}-b\in dw\}}]=\left[
e^{\rho(\delta)(b-y)}\frac{1}{\varphi_D '(\rho(\delta)) }-W^{(\delta)}(b-y)\right]dy
.[1-e^{-\rho(0)w}]Q(dw+y).
\end{equation}
\end{thm}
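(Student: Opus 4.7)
The plan is to split the proof into two essentially independent parts: a short Markov-property argument for \eqref{Last_D_T} and \eqref{Last_D_T_density}, and a more technical marginalization of the joint law \eqref{n_uple} for \eqref{Last_D_T_over_under}.

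For \eqref{Last_D_T} and \eqref{Last_D_T_density}, I would first observe that $\{L_b<t\}=\{\inf_{s\ge t}D_s>b\}$, since after the last passage below $b$ the process stays strictly above $b$. Conditioning on $\mathcal{F}_t$ and using the Markov property, $\PP(L_b<t\mid D_t=a)=\PP_0(\underline{D}_\infty>b-a)$. For $a\le b$ this probability is $0$ because $\underline{D}_\infty\le D_0=0\le b-a$, while for $a>b$ the law of $-\underline{D}_\infty$ for a spectrally positive L\'evy process drifting to $+\infty$ is the classical scale-function identity $\PP_0(-\underline{D}_\infty\le x)=-\varphi_D'(0+)W(x)=\EE[D_1]\,W(x)$ (continuity of this law at $x=a-b$ follows from $\sigma>0$, see Remark \ref{scale_diff}). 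Adopting the convention $W(x)=0$ for $x<0$, this immediately yields \eqref{Last_D_T_density} upon writing $\PP(L_b\ge t,D_t\in da)=f_{D_t}(a)\,da-\PP(L_b<t,D_t\in da)$, and integrating over $a$ gives \eqref{Last_D_T}.

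For \eqref{Last_D_T_over_under}, I would start from Theorem \ref{Coro_KPR} and marginalize out the variables $(r,v,t,u)$ while taking the Laplace transform in $s=L_b$. The computation factorizes cleanly thanks to the product structure of the right-hand side of \eqref{n_uple}. The $(t,u)$ block integrates via the change of variable $u'=w-u$ into $\hat{\mathcal U}_0([0,w))=\int_0^w e^{-\rho(0)x}\,dx=\bigl(1-e^{-\rho(0)w}\bigr)/\rho(0)$ using \eqref{calU_hat}. For the $(r,v,s)$ block, setting $s'=s-r$ decouples the $e^{-\delta s}$ factor: the integration in $r$ against $\hat{\mathcal U}(dr,dv)$ produces $\hat{\mathcal U}_\delta(dv)=e^{-\rho(\delta)v}dv$, while the integration in $s'$ against $\mathcal U(ds',b+v-dy)$ produces $\mathcal U_\delta(b+v-dy)=[-\rho(\delta)W^{(\delta)}(b+v-y)+W^{(\delta)'}(b+v-y)]\,dy$ via Lemma \ref{prop_calU_LT}.

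The main obstacle is then the remaining integral over $v$. I would recognize the integrand, after the substitution $u=b-y+v$, as
\[
\int_{b-y}^{\infty} e^{-\rho(\delta)u}\bigl[W^{(\delta)'}(u)-\rho(\delta)W^{(\delta)}(u)\bigr]du=\bigl[e^{-\rho(\delta)u}W^{(\delta)}(u)\bigr]_{b-y}^{\infty},
\]
using $(e^{-\rho(\delta)u}W^{(\delta)}(u))'=e^{-\rho(\delta)u}[W^{(\delta)'}(u)-\rho(\delta)W^{(\delta)}(u)]$. The upper limit is handled precisely by the identities \eqref{shifted}--\eqref{shifted2} invoked in the proof of Lemma \ref{prop_calU_LT}, which give $e^{-\rho(\delta)u}W^{(\delta)}(u)=W_{\rho(\delta)}^{(0)}(u)\to 1/\varphi_D'(\rho(\delta))$. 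Multiplying by the prefactor $e^{\rho(\delta)(b-y)}$ yields $e^{\rho(\delta)(b-y)}/\varphi_D'(\rho(\delta))-W^{(\delta)}(b-y)$. Collecting this with the $(t,u)$ block and the factor $\rho(0)$ of Theorem \ref{Coro_KPR} cancels the $1/\rho(0)$ and delivers exactly \eqref{Last_D_T_over_under}. The only delicate point I expect is justifying the integration-by-parts/boundary-at-infinity step, but this is precisely what Remarks \ref{scale_diff}, \ref{scale_boundary} and the asymptotics \eqref{shifted}--\eqref{shifted2} already established in the paper are designed for.
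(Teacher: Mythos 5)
Your proposal is correct and follows essentially the same route as the paper: the Markov property at time $t$ combined with the scale-function expression $\EE[D_1]W(a-b)$ for the probability of never returning below $b$ (the paper phrases it as $\PP_{a-b}[T_0=+\infty]$, you as the law of $-\underline{D}_\infty$, which is the same identity), and then marginalization of the $n$-tuple law of Theorem \ref{Coro_KPR} using Lemma \ref{prop_calU_LT}, the identity (\ref{calU_hat}), and the boundary behaviour (\ref{shifted})--(\ref{shifted2}) for the $v$-integral. The only cosmetic difference is that you evaluate $\int e^{-\rho u}[W^{(\delta)\prime}-\rho W^{(\delta)}]$ as an exact derivative rather than via the explicit integration by parts (\ref{IPP2}) of the paper, which is the same computation.
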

Let us compare results given in Theorem \ref{thm_Last_D_T} with existing ones in the literature
concerning last passage times of L\'evy processes. References \cite{Chiu_Yin} and \cite{Baurdoux}
give distributions of respectively last exit times and last exit times before an exponentially
distributed time, in terms of their Laplace transform, for a similar class of L\'evy processes;
however Theorem \ref{thm_Last_D_T} is more adapted here as it directly gives its cdf jointly to the
density of the overshoot, thus avoiding an inverse Laplace transform. As said before, the slight
advantage of Formula (\ref{Last_D_T_over_under}) over (\ref{n_uple}) is that it only involves the
scale function.

\begin{proof}
Let us start by showing (\ref{Last_D_T}) and (\ref{Last_D_T_density}). Let $t>0$. By definition of
$L_b$ we note that for all $a\ge b$ event $\left[  L_b <t, D_t\in da\right]$ is equal to
$\left[D_t\in da,\ \{D_s\} \mbox{ will not hit level }b\mbox{ anymore after } t\right]$. Hence
using the Markov property:
\begin{eqnarray*}
\PP\left[  L_b <t, D_t\in da\right]&=&\PP_{a-b} \left[  T_0=+\infty \right]\PP\left[ D_t \in da\right]
\end{eqnarray*}
where $\PP_{a-b} \left[  T_0=+\infty \right]$ is the probability that process $\{D_t,\  t\ge 0\}$
starting from $a-b$ will never hit $0$ and is given e.g. through Formula (4) p.19 of \cite{KP} by
$\PP_{a-b} \left[  T_0=+\infty \right]=\EE[D_1]W(a-b)$ and $\PP\left[ D_t \in
da\right]=f_{D_t}(a)da$ where $f_{D_t}$ is the density of r.v. $D_t$ and $W(.)=W^{(0)}(.)$ in
(\ref{eq:Wq}). By integrating $a$ from $b$ to $+\infty$ one gets (\ref{Last_D_T}). Equation
(\ref{Last_D_T_density}) stems from the basic equality $\PP(L_b\ge t, \ D_t\in da)=\PP(D_t\in da)-
\PP(L_b< t, \ D_t\in da)$.
\\
We now turn to (\ref{Last_D_T_over_under}), and use Theorem \ref{Coro_KPR} to this end. Since by
Fubini theorem we have
$$
\EE[e^{-\delta L_b}\nbu_{\{b-D_{L_b-}\in dy,\ D_{L_b}-b\in dw\}}]= \int_{s=0}^\infty e^{-\delta
s}\PP[L_b\in ds, b-D_{L_b-}\in dy,\ D_{L_b}-b\in dw],
$$
and in view of (\ref{n_uple}), one just needs to compute the following integral:
\begin{multline}\label{to_integrate}
\int_{v=0}^\infty\!\!\int_{t=0}^\infty\!\! \int_{s>r>0}\!\!\int_{u=0}^w e^{-\delta
s}\PP[\underline{G}_\infty \in dr,\ -\underline{D}_\infty\in dv,\ \underrightarrow{\cal
D}_{L_b}-L_b\in dt,\ L_b\in ds,\\
\underrightarrow{D}_{L_b}-b\in du, b-D_{L_b-}\in dy,\ D_{L_b}-b\in dw]\\
= \rho(0) \int_{v=0}^\infty \!\! \int_{s>r>0}\hat{\cal U}(dr,dv)e^{-\delta s} {\cal
U}(ds-r,b+v-dy).\int_{u=0}^w \int_{t=0}^\infty \hat{\cal U}(dt,w-du).Q(dw+y),
\end{multline}
which we strive to do now. The first integral in the righthandside of (\ref{to_integrate})
verifies,
\begin{eqnarray}
&&\int_{v=0}^\infty \!\! \int_{s>r>0}\hat{\cal U}(dr,dv)e^{-\delta s} {\cal U}(ds-r,b+v-dy)\nonumber\\
&=&
\int_{v=0}^\infty \!\! \int_{r=0}^\infty \hat{\cal U}(dr,dv) \int_{s=r}^\infty e^{-\delta s} {\cal
U}(ds-r,b+v-dy)\nonumber\\
&=& \int_{v=0}^\infty \!\! \int_{r=0}^\infty \hat{\cal U}(dr,dv) e^{-\delta r}{\cal
U}_\delta(b+v-dy)\nonumber\\
&=& \int_{v=0}^\infty \!\! \int_{r=0}^\infty e^{-\delta r}\hat{\cal
U}(dr,dv)\left[W^{(\delta)'}(b+v-y)- \rho(\delta)W^{(\delta)}(b+v-y)\right] dy\quad \mbox{by Lemma
\ref{prop_calU_LT}}\nonumber\\
&=& \int_{v=0}^\infty \hat{\cal U}_\delta(dv)\left[
W^{(\delta)'}(b+v-y)-\rho(\delta)W^{(\delta)}(b+v-y)
\right] dy\nonumber\\
&=& \int_{v=0}^\infty   e^{-\rho(\delta)v}dv  \left[
W^{(\delta)'}(b+v-y)-\rho(\delta)W^{(\delta)}(b+v-y)\right] dy\quad \mbox{by
(\ref{calU_hat}).}\label{to_integrate3}
\end{eqnarray}
Relation (\ref{shifted}) yields that $e^{-\rho(\delta)v}
W^{(\delta)}(b-y+v)=e^{\rho(\delta)(b-y)}W_{\rho(\delta)}^{(0)}(b-y+v)$ which, from
(\ref{shifted2}), tends to $e^{\rho(\delta)(b-y)}\frac{1}{\varphi_D '(\rho(\delta)) }$ as
$v\to+\infty$. This justifies the following integration by parts:
\begin{multline}
\int_{v=0}^\infty  e^{-\rho(\delta)v} W^{(\delta)'}(b+v-y)dv= \left[ e^{-\rho(\delta)v}
W^{(\delta)}(b+v-y)\right]_{v=0}^\infty + \int_{v=0}^\infty  \rho(\delta)e^{-\rho(\delta)v}
W^{(\delta)}(b+v-y)dv\\
= e^{\rho(\delta)(b-y)}\frac{1}{\varphi_D '(\rho(\delta)) } - W^{(\delta)}(b-y)+ \int_{v=0}^\infty
\rho(\delta)e^{-\rho(\delta)v} W^{(\delta)}(b+v-y)dv, \label{IPP2}
\end{multline}
which, inserted in (\ref{to_integrate3}), yields the following simplification
\begin{equation}\label{to_integrate4}
\int_{v=0}^\infty \!\! \int_{s>r>0}\hat{\cal U}(dr,dv)e^{-\delta s} {\cal U}(ds-r,b+v-dy)=\left[
e^{\rho(\delta)(b-y)}\frac{1}{\varphi_D '(\rho(\delta)) }-W^{(\delta)}(b-y)\right]dy.
\end{equation}
The second integral in the righthandside of (\ref{to_integrate}) verifies
\begin{eqnarray}
\int_{u=0}^w \int_{t=0}^\infty \hat{\cal U}(dt,w-du)&=& \int_{u=0}^w \hat{\cal
U}_0(w-du)\nonumber\\
&=& \int_{u=0}^w e^{-\rho(0)(w-u)}du\quad \mbox{by (\ref{calU_hat})}\nonumber\\
&=& \frac{1}{\rho(0)}[1-e^{-\rho(0)w}].\label{to_integrate5}
\end{eqnarray}
Plugging (\ref{to_integrate4}) and (\ref{to_integrate5}) into (\ref{to_integrate}) yields
(\ref{Last_D_T_over_under}).
\end{proof}
\subsection{Examples}

We consider here some examples from those studied previously and for which last-passage time is relevant.

\paragraph{Brownian motion with positive drift} In the case where $G_t=\mu t$, $\mu>0$ and $D_t=G_t+\sigma
B_t=\mu t+\sigma B_t$, we have
\begin{eqnarray*}
\EE [D_1]&=&\mu,\\
W(a-b) & = & W^{(0)}(a-b)=\frac{2}{\mu} e^{-\mu (a-b)/\sigma^2}\sinh \left( \frac{a-b}{\sigma^2}
\mu\right) \quad \mbox{from (\ref{scale_BM})},\\
f_{D_t}(a)&=& \frac{1}{\sigma \sqrt{2\pi  t}} e^{-(a-\mu t)^2/(2\sigma^2 t)},
\end{eqnarray*}
which, plugged in (\ref{Last_D_T}) and (\ref{Last_D_T_density}), yields expression of the cdf
$t\mapsto \PP[L_b<t]$ as well as its cdf jointly to density of $D_t$. Note that by deriving this
expression of the cdf one obtains after some calculation the following density for $L_b$
$$
\PP[L_b\in dt]= \frac{\mu}{\sqrt{2\pi t}}e^{-\frac{(b-\mu t)^2}{2t}}dt,
$$
which agrees with the already known density of the last passage time of a Brownian motion with drift, see e.g. Expression (1.12) p.2 of \cite{PRY}.

\paragraph{Perturbed gamma process} In the case where $\{G_t,\ t\ge 0\}$ is a gamma process with shape parameter $\alpha$
and scale parameter $\xi$, densities of $G_t$ and $\sigma B_t$ are given by
$f_{G_t}(u)=\frac{u^{\alpha t -1}}{\Gamma(\alpha t)}\frac{e^{-u/\xi}}{\xi^{\alpha t}}$ and
$f_{\sigma B_t}(u)=\frac{1}{\sigma \sqrt{2\pi  t}} e^{-u^2/(2\sigma^2 t)}$. We also recall that function $H(\delta,x)$ defined in
Proposition \ref{expr_scale_fc} has expression given in (\ref{EDO_W}) with characteristics of the gamma perturbed process being given by (\ref{car_pert_gamma}). Hence a bit of calculation yields
\begin{eqnarray*}
\EE [D_1]&=&\alpha \xi,\\
W(a-b) &  =& \int_0^{a-b} e^{-\rho(\delta)(a-b-y)}H(\delta,y)dy,\\
f_{D_t}(a)&=& f_{G_t}\star f_{\sigma B_t}(a)= \frac{1}{\sigma \sqrt{2\pi  t} \Gamma(\alpha
t)\xi^{\alpha t} }\int_0^{+\infty} u^{\alpha t -1} e^{-u/\xi}
e^{-(a-u)^2/(2\sigma^2 t)} du,\\
&=& \frac{e^{-a^2/(2\sigma^2 t)}}{\sigma \sqrt{2\pi  t} \Gamma(\alpha t)\xi^{\alpha t}
}\int_0^{+\infty} u^{\alpha t -1}  e^{-\frac{1}{2\sigma^2 t}\left( u^2+\left( \frac{2\sigma^2
t}{\xi}-2a\right)u\right)}du\\
&=& \frac{e^{-a^2/(2\sigma^2 t)}}{\sigma \sqrt{2\pi  t} \Gamma(\alpha t)\xi^{\alpha t} } (\sigma
\sqrt{t})^{\alpha t -1} \int_0^{+\infty} x^{\alpha t -1}  e^{-\frac{1}{2} x^2-\frac{1}{2\sigma
\sqrt{t}}\left( \frac{2\sigma^2 t}{\xi}-2a\right)x}dx,\quad x:=u/(\sigma \sqrt{t}),\\
&=& \frac{(\sigma \sqrt{t})^{\alpha t -2}}{ \sqrt{2\pi } \Gamma(\alpha t)\xi^{\alpha t} }
e^{-\frac{a^2}{2\sigma^2 t}- \frac{1}{4\sigma^4}\left(\frac{\sigma^2 t}{\xi}-a\right)^4 }
\mbox{D}_{-\alpha t}\left( \frac{\sigma \sqrt{t}}{\xi}- \frac{a}{\sigma \sqrt{t}}\right)
\end{eqnarray*}
where $\Gamma(s)=\int_0^\infty e^{-t}t^{s-1}dt$, $s>0$, is the gamma function and
$\mbox{D}_p(z)=\frac{e^{-z^2/4}}{\Gamma(-p)} \int_0^\infty e^{-zx-x^2/2}x^{-p-1}dx$, $p<0$, is the
parabolic cylinder function (see (9.241.2) p.1064 of \cite{GR}). These expressions, plugged in
(\ref{Last_D_T_density}) and (\ref{Last_D_T_over_under}), yield expression of the cdf of $L_b$
jointly to density of $D_t$ as well as the Laplace transform of $L_b$ jointly to density of the
over and undershoot.


\paragraph{Perturbed compound Poisson process with phase-type distributed jumps} In the case where $\{G_t,\ t\ge 0\}$ is
a compound Poisson process with phase-type distributed jumps of parameters as in Section
\ref{subsection_examples}, we have, using same notations as in that section that density of shocks
is equal to $p(x)=\alpha e^{x{\bf T}}{\bf t}$ (see Theorem 1.5(b) p.218 of
\cite{Asmussen_ruinproba})and
\begin{eqnarray*}
\EE [D_1]&=& -\alpha {\bf T}^{-1} {\bf 1},\\
W(a-b) & = & \frac{2}{\sigma^2\varrho_0}\sum_{i\in {\cal I}_0}
A_{i,0}\frac{\xi_{i,0}}{\rho(0)+\xi_{i,0}}\left[e^{\rho(0)(a-b)} - e^{-\xi_{i,0} (a-b)}\right]\quad
\mbox{from (\ref{scale_PH}) with } \delta=0, \\
f_{D_t}(a)&=& \sum_{n=0}^\infty f_{\sigma B_t}\star p^{\star (n)}(a)e^{-\lambda t}\frac{(\lambda
t)^n}{n!}
\end{eqnarray*}
where $f_{\sigma B_t}(u)=\frac{1}{\sigma \sqrt{2\pi  t}} e^{-u^2/(2\sigma^2 t)}$. These
expressions, plugged in (\ref{Last_D_T_density})and (\ref{Last_D_T_over_under}), yield expression
of the cdf of $L_b$ jointly to density of $D_t$ as well as the Laplace transform of $L_b$ jointly
to density of the over and undershoot.

\subsection{Reflected processes}

As for the previous section dealing with first-passage time, we consider the last-passage time for the reflected version of perturbed increasing Lévy process.

\begin{thm}\label{thm_Last_D_T*_LT}
The Laplace transform of $L_b^*$ is given by
$$
\EE\left[e^{-\delta L_b^*}\right]=\EE[D_1]\int_b^\infty W'(a-b)\phi(\delta,a)da
$$
where we recall that $\phi(\delta,a)=\EE[e^{-\delta T_a}]=\phi_w(\delta,a)$ with $w\equiv 1$.
\end{thm}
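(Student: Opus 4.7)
The plan is to reduce the statement to the two ingredients already developed in the paper: the duality identity of Lemma \ref{duality_lemma}, which links the marginal law of $D_t^*$ to the cdf of $T_a$, and the escape probability $\PP_x(T_0=+\infty)=\EE[D_1]W(x)$ (Formula (4) p.19 of \cite{KP}) which is exactly what was used in the proof of Theorem \ref{thm_Last_D_T}. First I would apply the Markov property to $\{D_t^*,\ t\ge 0\}$ at time $t$, conditioning on $D_t^*=a$ with $a>b$. Since $\{D_t^*,\ t\ge 0\}$ drifts to $+\infty$ almost surely,
$$
\PP(L_b^*<t\mid D_t^*=a)=\PP_a(D_u^*>b\ \forall u\ge 0).
$$
The key observation is that as long as the trajectory stays above $b>0$, the Skorokhod reflection at $0$ is inactive, so starting from $a>b$ the reflected process coincides with the unreflected one up to its first passage below $b$. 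Hence this probability equals $\PP_{a-b}(T_0=+\infty)=\EE[D_1]\,W(a-b)$ with $W=W^{(0)}$.

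Integrating against the law of $D_t^*$ and using Lemma \ref{duality_lemma} in the form $\PP(D_t^*\in da)=-d_a\PP(T_a\le t)$, I would write
$$
\PP(L_b^*<t)=\EE[D_1]\int_b^\infty W(a-b)\bigl(-d_a\PP(T_a\le t)\bigr),
$$
then integrate by parts. The boundary terms vanish: at $a=b$ because $W(0)=0$ by Remark \ref{scale_boundary}, and at $a=+\infty$ because $\PP(T_a\le t)\to 0$ while $W$ is bounded (indeed $W(x)\le 1/\EE[D_1]$, as follows from the probabilistic representation above), so
$$
\PP(L_b^*<t)=\EE[D_1]\int_b^\infty W'(a-b)\PP(T_a\le t)\,da.
$$

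To finish, I would take the Laplace transform through $\EE[e^{-\delta L_b^*}]=\int_0^\infty \delta e^{-\delta t}\PP(L_b^*<t)\,dt$, apply Fubini's theorem to exchange the two integrals, and recognize $\int_0^\infty \delta e^{-\delta t}\PP(T_a\le t)\,dt=\EE[e^{-\delta T_a}]=\phi(\delta,a)$. This produces exactly the announced formula. The delicate step is the very first one: justifying rigorously that, conditionally on $D_t^*=a>b$, the future of the reflected process is distributed as $\{D_s+a\}$ up to its first passage below $b$, which relies on the fact that the Skorokhod correction term stays constant as long as the running infimum of $D$ does not reach a new minimum. Once this is in place, the remaining manipulations are routine integration by parts and Fubini.
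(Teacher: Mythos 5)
Your proof is correct and follows essentially the same route as the paper: the Markov property plus the observation that the reflection at $0$ is inactive while the path stays above $b$ reduces the event of never returning below $b$ to $\PP_{a-b}(T_0=+\infty)=\EE[D_1]W(a-b)$, and then Lemma \ref{duality_lemma} together with an integration by parts (the paper instead writes $W(D_T^*-b)=\int_b^\infty W'(a-b)\nbu_{\{D_T^*>a\}}\,da$ and applies Fubini) turns $W$ into $W'$ and $\PP(D^*_\cdot>a)$ into $\phi(\delta,a)$. The only cosmetic difference is that you work at a deterministic time $t$ and take the Laplace transform at the end, whereas the paper randomizes with an independent exponential time $T\sim{\cal E}(\delta)$ from the outset; your intermediate formula for $\PP(L_b^*<t)$ is a harmless by-product of this reordering.
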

\begin{proof}
We start similarly as in the proof of Theorem \ref{thm_Last_D_T} and let $T$ an independent r.v.
following an ${\cal E}(\delta)$. Event $\left[  L_b^* <T, D_T^*\in da\right]$ is equal to
$\left[D_T^*\in da,\ \{D^*_s\} \mbox{ will not hit level }b\mbox{ anymore after } T\right]$. Since
reflected process $\{D^*_t,\  t\ge 0\}$ behaves like the non reflected process $\{D_t,\  t\ge 0\}$
on event $\left[ \{D^*_s\} \mbox{ will not hit level }b\mbox{ anymore after } T\right]$ for $t\ge
T$, we have, for all $a>b$, and using the Markov property,
\begin{eqnarray}\label{proof_Last_D_T*_LT}
\PP\left[  L_b^* <T, D_T^*\in da\right]&=&\PP_{a-b} \left[  T_0=+\infty \right]\PP\left[ D^*_T \in
da\right]
\end{eqnarray}
where $\PP_{a-b} \left[  T_0=+\infty \right]$ is the probability that process $\{D_t,\  t\ge 0\}$
starting from $a-b$ will never hit $0$ and has expression $\EE[D_1]W(a-b)$, as observed in Proof of
Theorem \ref{thm_Last_D_T}. Since $W(z)=0$ on $z\le 0$, we have by Fubini theorem (and since $W(.)$
is a differentiable function by Remark \ref{scale_diff}),
\begin{eqnarray*}
\EE\left[e^{-\delta L_b^*}\right]=\int_{a=b}^\infty \PP\left[  L_b^* <T, D_T^*\in da\right]&=&
\EE[D_1]\int_{a=b}^\infty
W(a-b)\PP\left[ D^*_T \in da\right]\\
&=& \EE[D_1] \EE[W(D^*_T-b)]\\
&=& \EE[D_1] \EE\left[ \int_{a=b}^\infty W'(a-b)\nbu_{\{ D^*_T > a\}}da \right]\\
&=& \EE[D_1] \int_{a=b}^\infty W'(a-b) \PP[D^*_T > a]da.
\end{eqnarray*}
From Lemma \ref{duality_lemma}, we have that $\PP[D^*_T > a]=\PP[T_a \le T]$ which is equal to
$\phi(\delta,a)$, as $T$ follows an ${\cal E}(\delta)$ distribution. This yields the result.
\end{proof}

Again we emphasize that $\phi(\delta,a)=\EE[e^{-\delta T_a}]$ is available in practice either
through series (\ref{PK_phi}) in Proposition \ref{prop_renewal_LT}, or through (\ref{KP_T_b}) in
Proposition \ref{approach_KP}. Also note that proof of Theorem \ref{thm_Last_D_T*_LT} implicitly
yields the following side result.
\begin{prop}
Let $T$ be an independent ${\cal E}(\delta)$ distributed r.v. Then for all $a\ge b$ we have
\begin{equation}\label{eq_Last_D_T*_density}
\PP[L^*_b \ge T,\ D^*_T\in da]=-[1-E[D_1]W(a-b)]\frac{\partial}{\partial a}\phi(\delta,a)da.
\end{equation}
\end{prop}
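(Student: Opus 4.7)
The proof should follow quite directly from the computations already set up in the proof of Theorem \ref{thm_Last_D_T*_LT}, combined with Lemma \ref{duality_lemma}. The plan is to recycle the Markov-property identity (\ref{proof_Last_D_T*_LT}), take the complementary event, and then identify the marginal law of $D^*_T$ at an exponential time in terms of the already-known function $\phi(\delta,\cdot)$.

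First, I would recall from the proof of Theorem \ref{thm_Last_D_T*_LT} that for $a > b$,
\begin{equation*}
\PP[L_b^* < T,\ D_T^*\in da] \;=\; \EE[D_1]\,W(a-b)\,\PP[D_T^*\in da],
\end{equation*}
which is obtained by the strong Markov property at time $T$ and the fact that after hitting $a$ above $b$, the reflected process behaves like the non-reflected one as long as it does not come back to $b$. Taking the complement in the event $\{L_b^*<T\}$ at the fixed level $a$ immediately gives
\begin{equation*}
\PP[L_b^* \ge T,\ D_T^*\in da] \;=\; [1-\EE[D_1]W(a-b)]\,\PP[D_T^*\in da].
\end{equation*}

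Next, I would identify $\PP[D_T^*\in da]$ via Lemma \ref{duality_lemma}. Since $T$ is an independent ${\cal E}(\delta)$ r.v., conditioning on $T$ and using Fubini yields
\begin{equation*}
\PP[D^*_T > a] \;=\; \int_0^\infty \PP[D^*_t>a]\,\delta e^{-\delta t}\,dt \;=\; \int_0^\infty \PP[T_a\le t]\,\delta e^{-\delta t}\,dt \;=\; \EE[e^{-\delta T_a}] \;=\; \phi(\delta,a),
\end{equation*}
where the last equality follows from integration by parts (or just the observation that $\delta e^{-\delta t}\,dt$ is the density of $T$). Differentiating in $a$ therefore gives $\PP[D_T^*\in da] = -\frac{\partial}{\partial a}\phi(\delta,a)\,da$.

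Substituting this into the previous display produces exactly (\ref{eq_Last_D_T*_density}). There is no real obstacle here: the Markov step was already done in the proof of Theorem \ref{thm_Last_D_T*_LT}, and the only point requiring a small justification is the differentiability of $a\mapsto\phi(\delta,a)=\PP[T_a\le T]$, which is legitimate since $T_a$ has a density (thanks to $\sigma>0$, so that $D_{T_a}$ creeps continuously through $a$ on the Brownian part and otherwise has a jump density governed by $Q$); alternatively one may read (\ref{eq_Last_D_T*_density}) directly as an equality of measures in $a$, which is enough.
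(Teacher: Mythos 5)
Your proposal follows the paper's proof step for step: complement the event in the identity (\ref{proof_Last_D_T*_LT}) from the proof of Theorem \ref{thm_Last_D_T*_LT} to get $\PP[L_b^*\ge T,\,D^*_T\in da]=[1-\EE[D_1]W(a-b)]\,\PP[D^*_T\in da]$, then use Lemma \ref{duality_lemma} with the independent exponential $T$ to identify $\PP[D^*_T>a]=\phi(\delta,a)$ and differentiate in $a$. The only place you diverge is the justification of differentiability of $a\mapsto\phi(\delta,a)$: your argument that ``$T_a$ has a density'' does not by itself give smoothness in the level $a$ (density in $t$ is not regularity in $a$), and reading the claim merely as an equality of measures does not suffice either, since the stated formula asserts that $D^*_T$ has a density equal to $-\partial_a\phi(\delta,a)$. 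The clean fix, which is what the paper does, is to invoke the explicit expression (\ref{KP_T_b}), $\phi(\delta,a)=Z^{(\delta)}(a)-\frac{\delta}{\rho(\delta)}W^{(\delta)}(a)$, together with the differentiability of $W^{(\delta)}$ guaranteed by Remark \ref{scale_diff} (a consequence of $\sigma>0$); with that substitution your proof is complete and identical in substance to the paper's.
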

\begin{proof}
As in showing (\ref{Last_D_T_density}), we use the fact that $\PP[L^*_b \ge T,\ D^*_T\in da]=\PP[D^*_T\in da]- \PP\left[  L_b^* <T, D_T^*\in da\right]$ as well as (\ref{proof_Last_D_T*_LT}) to derive that $\PP[L^*_b \ge T,\ D^*_T\in da]=[1-E[D_1]W(a-b)] \PP[D^*_T\in da]$. To obtain (\ref{eq_Last_D_T*_density}) we just need to prove that r.v. $D^*_T$ admits a density given by $\PP[D^*_T\in da]/da=-\frac{\partial}{\partial a}\phi(\delta,a)$. Indeed Lemma \ref{duality_lemma} yields that $\PP[D^*_T > a]=\PP[T_a \le T]=\EE[e^{-\delta T_a}]=\phi(\delta,a)$, thus what remains to prove is that $\EE[e^{-\delta T_a}]=\phi(\delta,a)$ is differentiable with respect to $a$. This can be seen thanks to the convenient expression (\ref{KP_T_b}) that yields that differentiability property since function $W^{(\delta)}$ is a differentiable function by Remark \ref{scale_diff} (and $Z^{(\delta)}$ is obviously differentiable by (\ref{eq:Zq})).
\end{proof}

\section{A maintenance policy}\label{Section_maintenance}

We now as an application consider the maintenance strategy described in Barker and Newby \cite{Barker_Newby}. Degradation of a certain component is modelled according to a process $\{X_t,\  t\ge 0\}$. We suppose that, without maintenance, $\{X_t,\  t\ge 0\}$ is a perturbed process with same parameters as $\{D_t,\  t\ge 0\}$and that failure occurs at the last passage time $L_b$ of level $b$ of the
degradation process. 
\\

Let us then consider the following maintenance rule. The component is inspected at times $(U_i)_{i=1,2,...}$ such that inter inspection time verifies $U_{i+1}-U_i=m(X_{U_i+})$, where $m(.)$ is some non increasing function. Let $d:\nbR \longrightarrow \nbR$ be some "maintenance function". On inspection at time $U_i$, one of the following actions is undertaken:
\begin{itemize}
\item either the system did not fail in interval $(U_{i-1},U_i]$, in which case preventive maintenance occurs and degradation process evolves like $\{D_t,\  t\ge 0\}$ with initial condition $D_0=d(x)$ up until time $U_{i+1}$, where $x$ is degradation state at instant $U_i-$; in other words one has $X_{U_i}=d(X_{U_{i}-})$,
\item or the system failed in interval $(U_{i-1},U_i]$ in which case it is repaired and degradation process starts anew, i.e. evolves like $\{D_t,\  t\ge 0\}$ with initial condition $D_0=0$.
\end{itemize}
We will suppose in this section that function $d(.)$ is differentiable from $\nbR$ to $\nbR$ and bijective. Note that these two assumptions are not too stringent and can be relaxed, in which case expressions of distributions computed in this section would only be more complicated.
\\

We then define r.v. $I$ as the first inspection after which system is reset, i.e.
$$
I=\inf\{ i \in \nbN|\ \mbox{failure occurred in }(U_{i-1},U_i]\}.
$$
This means that $T^*:=U_I$ is a regeneration time for the degradation process. Process $\{X_t,\  t\ge 0\}$ then behaves like independent copies of $\{D_t,\  t\ge 0\}$ in intervals $(U_i,U_{i+1}]$ with possibly different initial states. Figure \ref{trajec1} shows a sample path of $\{X_t,\  t\ge 0\}$, with failure in interval $(U_5,U_{6}]$and thus starting anew at time $U_6$ with $X_{U_6}=0$. Note that process $\{X_t,\  t\ge 0\}$ thus constructed is c\`adl\`ag and such that, given its state at any instant $U_k$, $\{X_t,\ t > U_k\}$ is independent from $\{X_t,\ t\in[0;U_k)\}$, i.e. from its history before $U_k$. This can be written as
$$
\left.\Big[ X_t,\ t \ge U_k\right|\ X_s,\ s\in [0,U_k]\Big] \stackrel{\cal D}{=} \left.\Big[ X_t,\ t \ge U_k\right|\ X_{U_k}\Big].
$$
\begin{figure}[!h]%
\begin{center}
\includegraphics[scale=0.6]{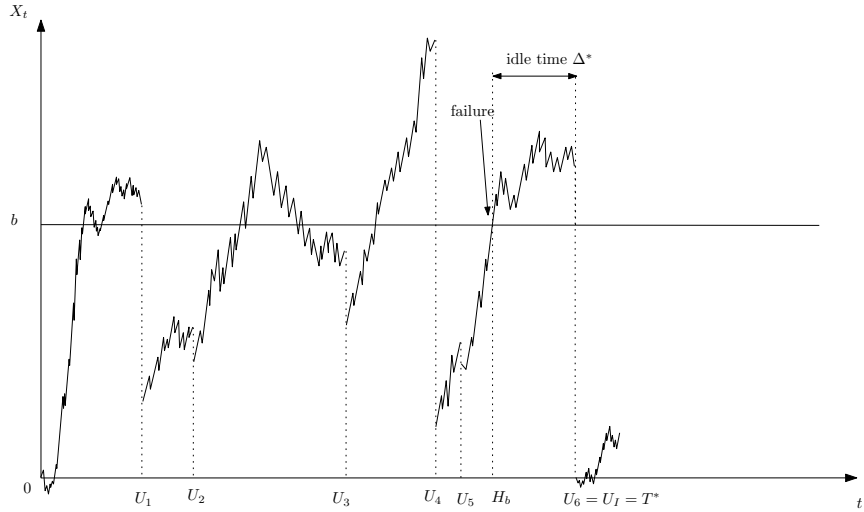}
\end{center}
\caption{Sample path of degradation process $\{X_t,\  t\ge 0\}$, with failure in $(U_5,U_{6}]$.}%
\label{trajec1}%
\end{figure}

We also introduce the idle time $\Delta^*$ which is the unavailability period of time during which component is down until next scheduled inspection:
$$
\Delta^*:=T^*-H_b\in [0,U_I-U_{I-1}]
$$
where $H_b$ is the failure time of the component and then necessarily lies in $[U_{I-1},U_I]$. We
are interested in quantities involving (possibly joined) distributions of $I$, $T^*$, $\Delta^*$ as
well as the state of the degradation process at inspection times. For this purpose we introduce the
following quantities:
\begin{itemize}
\item $A(x,dy):=\PP[L_b> m(x),\ d(D_{m(x)})\in dy|\ D_0=x]$ the distribution of the degradation process on inspection after maintenance jointly to the fact that there was no failure before inspection, given that degradation process starts at $x$,
\item $C(y):= \PP[L_b \le m(y)|\ D_0=y]$, the probability that failure occurred before next inspection, given that degradation process starts at $y$,
\item $C_r(y,z):=\PP[m(y)-L_b\ge z|\ m(y)\ge L_b,\ D_0=y]$, $z\in [0,m(y)]$, the survival function of the idle time given that degradation process starts at $y$.
\end{itemize}
These three quantities are easily obtained:
\begin{prop}\label{Prop1_maintenance}
We have the following expressions
\begin{eqnarray*}
A(x,dy)&=& [1-\EE[D_1]W(d^{-1}(y)-b+x)]\frac{f_{m(x)}(d^{-1}(y))}{d'[d^{-1}(y)]}dy,\\
C(y)&=& \int_{b-y}^\infty \EE[D_1]W(a-b+y)f_{m(y)}(a)da,\\
C_r(y,z)&=& \frac{1}{C(y)} \int_{b-y}^\infty \EE[D_1]W(a-b+y)f_{m(y)-z}(a)da.
\end{eqnarray*}
\end{prop}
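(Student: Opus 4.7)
All three formulas follow from Theorem~\ref{thm_Last_D_T} combined with the spatial homogeneity of the L\'evy process $\{D_t,\ t\ge 0\}$. The key elementary observation is that, under the conditioning $D_0=x$, the last-passage time $L_b$ has the same distribution as $L_{b-x}$ for the process started at $0$; this is exactly what produces the shift $+x$ (or $+y$) in the argument of the scale function $W$ in the stated expressions.

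I would treat $C(y)$ first. Applying identity (\ref{Last_D_T}) with $b-y$ in place of $b$ and $t=m(y)$ gives $C(y)=\PP_0[L_{b-y}<m(y)]$; an obvious change of variable $\alpha=a-y$ in the integral yields exactly the claimed expression, provided one interprets $f_{m(y)}(\cdot)$ as the density of $D_{m(y)}$ started at $0$. The formula for $C_r(y,z)$ then follows immediately from the definition of conditional probability: for $z\in[0,m(y)]$ the event $\{m(y)-L_b\ge z\}$ coincides with $\{L_b\le m(y)-z\}\subset\{L_b\le m(y)\}$, hence $C_r(y,z)=\PP_y[L_b\le m(y)-z]/C(y)$, and the numerator is the $C$-formula with $m(y)$ replaced by $m(y)-z$.

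For $A(x,dy)$ I would apply the joint-density identity (\ref{Last_D_T_density}) to the process started at $0$ with threshold $b-x$ at time $m(x)$, giving
\begin{equation*}
\PP[L_{b-x}\ge m(x),\ D_{m(x)}\in d\alpha]=\bigl[1-\EE[D_1]\,W(\alpha-b+x)\bigr]f_{m(x)}(\alpha)\,d\alpha .
\end{equation*}
The strict inequality $L_b>m(x)$ may be relaxed to $\ge$ since $L_b$ is absolutely continuous (Gaussian component, $\sigma>0$). The inspection map $d$ is then handled via the Jacobian change of variable $y=d(\alpha)$: by the standing assumptions of differentiability and bijectivity of $d$, one has $d\alpha=dy/d'(d^{-1}(y))$, so that substituting $\alpha=d^{-1}(y)$ produces the stated formula. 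No step is genuinely difficult; the only real care required lies in the bookkeeping of the shift $x\mapsto 0$ and in the Jacobian change of variable, for which the hypotheses on $d$ imposed just before the proposition are precisely what is needed.
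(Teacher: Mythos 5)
Your proof is correct and follows essentially the same route as the paper: spatial homogeneity reduces everything to the zero-started process with level $b-x$ (resp.\ $b-y$), then (\ref{Last_D_T_density}) with the Jacobian change of variable $y=d(\alpha)$ gives $A(x,dy)$, and (\ref{Last_D_T}) with $t=m(y)$, resp.\ $t=m(y)-z$, gives $C$ and $C_r$, the latter via the same conditional-probability identity the paper uses. The only cosmetic remarks are that for $C(y)$ no change of variable is actually needed (substituting $b:=b-y$ in (\ref{Last_D_T}) already yields the stated integral), and your handling of the $>$ versus $\ge$ distinction via absolute continuity of $L_b$ is a small extra care the paper omits.
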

\begin{proof}
We recall that we supposed that $d(.)$ is a one to one differentiable function out of practicality. Expression for $A(x,dy)$ simply comes from (\ref{Last_D_T_density}) with $t=m(x)$ and a simple change of variable $a=d^{-1}(y)$and remarking that last hitting time of level $b$ of process $\{D_t,\  t\ge 0\}$ with $D_0=x$ is the same in distribution as that of level $b-x$ of process $\{D_t,\  t\ge 0\}$ with $D_0=0$. Expression for $C(y)$ is obtained from (\ref{Last_D_T}) with $t=m(y)$ and $b:=b-y$ because of process starting from $y$. Finally expression for $C_r(y,z)$ comes from the fact that $$C_r(y,z)= \frac{\PP[m(y)-L_b\ge z|\ D_0=y]}{\PP[m(y)\ge L_b,|\ D_0=y]}= \frac{\PP[m(y)-L_b\ge z|\ D_0=y]}{C(y)}
$$
and using (\ref{Last_D_T}) with $T=m(y)-z$ and $b:=b-y$ to obtain expression of $\PP[m(y)-L_b\ge z|\ D_0=y]$.
\end{proof}

We may now state main results of this section that concern quantities of interest introduced at the
beginning of the section.
\begin{thm}\label{thm_maintenance}
Distribution of $I$ jointly to the state of the degradation process just after inspection and preventive maintenance is given by
\begin{equation}\label{thm1_maintenance}
\PP[I=i,\ X_{U_1}\in dy_1,...,\ X_{U_{i-1}}\in dy_{i-1}]=A(0,dy_1)\times A(y_1,dy_2)\times ...\times A(y_{i-2},dy_{i-1})\times C(y_{i-1}).
\end{equation}
Distribution of the idle time jointly to $I$ and the state of the degradation process just after inspection and preventive maintenance is given by
\begin{equation}\label{thm2_maintenance}
\PP[\Delta^*>z,\ I=i,\ X_{U_1+}\in dy_1,...,\ X_{U_{i-1}}\in dy_{i-1}]=A(0,dy_1)\times A(y_1,dy_2)\times ...\times A(y_{i-2},dy_{i-1})\times C_r(y_{i-1},z).
\end{equation}
\end{thm}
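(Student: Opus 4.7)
The plan is to leverage the regenerative/Markov structure of $\{X_t,\ t\ge 0\}$ that is explicitly stated just before the theorem: given $X_{U_k}$, the future $\{X_t,\ t\ge U_k\}$ is independent of the past, and on each inter-inspection interval $(U_k,U_{k+1}]$ the process has the same law as $\{D_t,\ t\ge 0\}$ started at $X_{U_k}$, observed over a deterministic time span $m(X_{U_k})$. The three quantities $A$, $C$ and $C_r$ computed in Proposition \ref{Prop1_maintenance} encode respectively the one-step survival-plus-transition probability, the one-step failure probability and the conditional survival of the idle time within one inspection cycle, so the entire proof amounts to assembling these one-step objects along a path of length $i$.

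For (\ref{thm1_maintenance}) I would decompose the target event as the intersection, for $k=1,\dots,i-1$, of the events ``no failure in $(U_{k-1},U_k]$ and $X_{U_k}\in dy_k$'', together with the final event ``failure in $(U_{i-1},U_I]$''. Conditioning successively on $X_{U_{k-1}}=y_{k-1}$ (with the convention $y_0=0$) and using the Markov property, each inner factor reduces to $\PP[L_b>m(y_{k-1}),\ d(D_{m(y_{k-1})})\in dy_k\mid D_0=y_{k-1}]=A(y_{k-1},dy_k)$, while the final factor is $\PP[L_b\le m(y_{i-1})\mid D_0=y_{i-1}]=C(y_{i-1})$. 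The product of these factors gives (\ref{thm1_maintenance}).

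For (\ref{thm2_maintenance}) the first $i-1$ factors are produced identically. Only the last factor differs: conditionally on $X_{U_{i-1}}=y_{i-1}$, the failure time satisfies $H_b-U_{i-1}\stackrel{\cal D}{=}L_b$ for the process $\{D_t\}$ started at $y_{i-1}$, so $\Delta^*=m(y_{i-1})-L_b$ on the event $\{L_b\le m(y_{i-1})\}$, and $\{\Delta^*>z\}$ coincides with $\{L_b\le m(y_{i-1})-z\}$. This event is captured jointly by $C_r(y_{i-1},z)$ (through its conditional definition together with the corresponding failure probability $C(y_{i-1})$), yielding the last factor and hence (\ref{thm2_maintenance}).

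The only real care needed concerns the iterated conditioning itself: one must check that each $U_k$ is $\sigma(X_s,\ s\le U_k)$-measurable, which is immediate since $U_k$ is built inductively from $X_{U_0},\dots,X_{U_{k-1}}$, and that the conditional law of $\{X_{U_k+t}\}_{t\in[0,m(X_{U_k})]}$ given $X_{U_k}=x$ coincides with that of $\{D_t\}_{t\in[0,m(x)]}$ starting at $x$, which is built into the dynamics. Hence the main obstacle here is purely one of bookkeeping, and no genuine analytic difficulty should arise beyond a careful recursive application of the Markov property.
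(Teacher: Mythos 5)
Your argument is essentially the paper's own proof: you decompose $\{I=i\}$ into the per-cycle events (no failure in $(U_{k-1},U_k]$ with $X_{U_k}\in dy_k$) plus the terminal failure event, factorize via the Markov/regenerative structure at the inspection epochs, and identify the factors with $A(y_{k-1},dy_k)$, $C(y_{i-1})$ and $C_r(y_{i-1},z)$ from Proposition \ref{Prop1_maintenance}, exactly as the paper does. One remark: as you implicitly note, the conditional last-step probability for (\ref{thm2_maintenance}) is $\PP[L_b\le m(y_{i-1})-z\mid D_0=y_{i-1}]=C(y_{i-1})\,C_r(y_{i-1},z)$, so with the conditional definition of $C_r$ the right-hand side of (\ref{thm2_maintenance}) should carry the extra factor $C(y_{i-1})$ --- a point the paper's one-line ``similar arguments'' also leaves unaddressed, and not a flaw in your reasoning.
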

\begin{proof}
The first probability is obtained by writing it in the form $\PP\left[\cap_{k=1}^{i-1}E_k\cap F_i\right]$ where
\begin{eqnarray*}
E_k&=& \Big[\mbox{ no failure in } (U_{k-1};U_k],\ d(X_{U_k})\in dy_k\Big]\\
F_i&=& \Big[\mbox{ failure in } (U_{i-1};U_i]\Big].
\end{eqnarray*}
Since evolution of process $X_t$ in $t\in[U_i,U_{i+1})$ given $X_{U_i}$ is independent from $X_t$, $t\in[0,U_i)$, we may write that probability in the following form
$$
\PP[I=i,\ X_{U_1}\in dy_1,...,\ X_{U_{i-1}}\in dy_{i-1}]=\prod_{k=1}^{i-1}\PP[E_k|\
X_{U_{k-1}}=y_{k-1}]\times \PP[F_i|\ X_{U_{i-1}}=y_{i-1}]
$$
and conclude by the fact that by the stationary increment property we have $\PP[E_k|\ X_{U_{k-1}}=y_{k-1}]=A(y_{k-1},dy_k)$ and $\PP[F_i|\ X_{U_{i-1}}=y_{i-1}]= C(y_{i-1})$ in order to obtain (\ref{thm1_maintenance}). (\ref{thm2_maintenance}) is derived by similar arguments.
\end{proof}

Note that Theorem \ref{thm_maintenance} yields other interesting quantities. For example the expected time before reparation jointly to the number of inspections/maintenances is obtained thanks to (\ref{thm1_maintenance}) by
$$
\EE\left[T^*\nbu_{\{ I=i\}} \right]= \int_{(y_1,...,y_{i-1})\in \nbR^{i-1}} \left[ \sum_{k=1}^{i-1}f(y_k)\right]A(0,dy_1)\times A(y_1,dy_2)\times ...\times A(y_{i-2},dy_{i-1})\times C(y_{i-1}).
$$
\begin{remark}[Case of the reflected process]
It is possible to adapt the previous setting to the reflected process $\{D^*_t,\  t\ge 0\}$ and constructed a reflected degradation process $\{X^*_t,\  t\ge 0\}$ with inspection and maintenance by considering exponentially distributed inter-inspection times $U_{i+1}-U_i$ of which conditional distribution given $X_{U_i}$ is ${\cal E} (1/m(X_{U_i}))$, instead of deterministic times, where  $m(.)$ is the same function as in the non reflected caseand again featuring a maintenance function $d(.)$. Results from Theorem \ref{thm_Last_D_T*_LT} as well as equality (\ref{eq_Last_D_T*_density}) would yield similar expressions for $A(x,dy)$, $C(y)$ for exponentially distributed horizonand an equivalent of Theorem \ref{thm_maintenance} for such an inspection strategy could be obtained.
\end{remark}



\bibliographystyle{alpha}

\end{document}